\keywords{predicate structures, equationally Noetherian structures, universal algebraic geometry}
\theoremstyle{plain} %\crefname{satz}{Satz}{S\"atze}
\def\x{X}
\def\a{a}
\def\L{\mathcal{L}}
\def\A{\mathcal{A}}
\def\La{\L_\A}
\def\N{\mathbb{N}}
\def\P{\mathcal{P}}
\tikzstyle{every node}=[circle, draw, fill=black!50, inner sep=0pt, minimum width=4pt]
\tikzstyle{red node}=[circle, draw, fill=red!80, inner sep=0pt, minimum width=4pt]
\tikzstyle{green node}=[circle, draw, fill=green!50, inner sep=0pt, minimum width=4pt]
\tikzstyle{blue node}=[circle, draw, fill=blue!50, inner sep=0pt, minimum width=4pt]
\tikzstyle{yellow node}=[circle, draw, fill=yellow!50, inner sep=0pt, minimum width=4pt]
\tikzstyle{blue box}=[rectangle, minimum height=0.3cm,draw, fill=blue!50, inner sep=0pt, minimum width=0.3cm]
\tikzstyle{red box}=[rectangle, minimum height=0.3cm, draw, fill=red!50, inner sep=0pt, minimum width=0.3cm]
\begin{document}

\title{On equationally Noetherian predicate structures}

\author[I.~M.~Buchinskiy]{Ivan Buchinskiy}	%required
\address{Sobolev Institute of Mathematics of SB RAS, Omsk, Russia}	%required
\email{buchvan@mail.ru}  %optional
%\thanks{thanks 1, optional.}	%optional
\thanks{2020 \textit{Mathematics Subject Classification.} 08A99, 03C05.}

\author[M. V. Kotov]{Matvei Kotov}	%optional
\address{Sobolev Institute of Mathematics of SB RAS, Omsk, Russia}	%optional
\email{matvej.kotov@gmail.com}  %optional
%\thanks{thanks 2, optional.}	%optional

\author[A. V. Treier]{Alexander Treier}	%optional
\address{Sobolev Institute of Mathematics of SB RAS, Omsk, Russia}	%optional
\email{alexander.treyer@gmail.com}  %optional
%\thanks{thanks 3, optional.}	%optional

%% etc.

%% required for running head on odd and even pages, use suitable
%% abbreviations in case of long titles and many authors:

%%%%%%%%%%%%%%%%%%%%%%%%%%%%%%%%%%%%%%%%%%%%%%%%%%%%%%%%%%%%%%%%%%%%%%%%%%%

%% the abstract has to PRECEDE the command \maketitle:
%% be sure not to issue the \maketitle command twice!

\begin{abstract}
  \noindent In this paper, we prove a criterion for a predicate structure to be equationally Noetherian.
\end{abstract}

\maketitle

\section{Introduction}
    Algebraic geometry over algebraic structures is a branch of mathematics that lies at the intersection of algebra and model theory. The main objects of study in this theory are algebraic sets, i.e., sets of solutions to systems of equations. Researchers also try to find patterns that are common to classes of algebraic structures and to generalize results that are true for specific algebraic structures to arbitrary algebraic structures. This theory arose in papers by Plotkin~\cite{Plotkin1997, Plotkin2003} for varieties of algebras and in a series of papers by Daniyarova, Miasnikov and Remeslennikov started in~\cite{DaniyarovaMyasnikovRemeslennikov2008, DaniyarovaMyasnikovRemeslennikov2012_1, DaniyarovaMyasnikovRemeslennikov2011} and subsequently published as a book~\cite{DaniyarovaMyasnikovRemeslennikov2016}. This theory was preceded by work on algebraic geometry over groups~\cite{BaumslagMyasnikovRemeslennikov1999, MyasnikovRemeslennikov2000}.

    In classical algebraic geometry over associative rings and in universal algebraic geometry over arbitrary algebraic structures, one of the very important roles plays the property of being equationally Noetherian. Recall that an algebraic structure is called \textit{equationally Noetherian} if for every finite set of variables $\x$, every system of equations $S(\x)$ is equivalent to a finite subsystem $S_0(\x) \subseteq S(\x)$. The importance of this property is related to the so-called unification theorems~\cite{DaniyarovaMyasnikovRemeslennikov2008}, which, for equationally Noetherian algebras, reduce problems of classifying algebraic sets to logical problems of describing some quasivarieties and universal classes. Also, in algebraic structures with this property, we can study only finite systems of equations. Additionally, algebraic sets can be decomposed into finite unions of irreducible sets. Another advantage is that such structures are good for calculating the dimension of algebraic sets~\cite[Chapter 6]{DaniyarovaMyasnikovRemeslennikov2016}.

    There are many examples of algebraic structures with and without this property. For example, all finite algebraic structures, abelian groups, linear groups over a Noetherian ring, and torsion-free hyperbolic groups are equationally Noetherian. On the other hand, some infinitely generated nilpotent groups,  wreath products of a non-abelian group and an infinite cyclic group, infinite direct products of non-abelian groups, and minimax algebraic structures are not equationally Noetherian \cite{BaumslagMyasnikovRomankov1997, BaumslagMyasnikovRemeslennikov1999, ShahryariShevlyakov2017, GuptaRomanovskii2007, DvorzhetskiyKotov2008}. 

    There are also several works devoted to the property of being equationally Noetherian and its generalizations~\cite{ModabberiShahryari2016, Kotov2011, Kotov2013_2, Shevlyakov2011}.

    In recent years, researchers in universal algebraic geometry have focused on algebraic structures with predicates. The theoretical foundations of this direction have been developed in the paper~\cite{DaniyarovaMyasnikovRemeslennikov2012_2}. Iljev and Remeslennikov~\cite{IljevRemeslennikov2017} and Buchinskiy and Treier~\cite{BuchinskiyTreier2023} have studied systems of equations over graphs. Shevlyakov has studied algebraic geometry over groups in a predicate language~\cite{Shevlyakov2018}, equations over direct powers of algebraic structures in relational languages~\cite{Shevlyakov2021}, and algebraic geometry over algebraic structures with the relation $\neq$~\cite{Shevlyakov2016}. Dvorzhetskiy~\cite{Dvorzhetskiy2013} has considered lattices with a finite collection of predicate symbols. Partially ordered sets have been considered in papers by Nikitin and Shevlyakov~\cite{NikitinShevlyakov2020, NikitinShevlyakov2021} and by Nikitin and Kudyk~\cite{NikitinKudyk2018}.

    An algebraic structure in a language without functional symbols is called a \textit{predicate structure}. Let a language $\L$ consist of a finite number of predicates and a finite number of constants, and let $\x$ be a finite set of variables. Then, there is only a finite number of nonequivalent systems of equations in the variables $\x$ in the language $\L$. It is easy to see that all such $\L$-structures are equationally Noetherian. Therefore, it is natural to ask whether such an $\L$-structure is equationally Noetherian or not in a language $\L'$ extended by an infinite number of constants. Let $\L$ be a language, and $\A = \left<A, \L\right>$ be an $\L$-structure. Denote by $\L_\A$ the language obtained from $\L$ by adding a new constant symbol $c_a$ for every element $a \in A$. All these constant symbols will be interpreted by the corresponding constants. Algebraic geometry in the language $\L_\A$ is called Diophantine. This case will be considered in this work. Also, by default, we will assume that every language includes the equality predicate $=$.

    Previously, in \cite{BuchinskiyTreier2023}, all equationally Noetherian graphs were described in terms of forbidden subgraphs. In this paper, developing and generalizing ideas from~\cite{BuchinskiyTreier2023}, we give a description of predicate algebraic structures with a finite number of predicates that are not equationally Noetherian (see Theorem~\ref{theorem_1P}).

    In the paper~\cite{Kotov2013_1}, a criterion for an arbitrary algebraic structure without predicates not to be equationally Noetherian was given. Note that that work was inspired by~\cite{BaumslagMyasnikovRomankov1997}. It turns out that the criterion is also true for algebraic structures with predicates. The proof of the generalized criterion is very similar to the proof of the original criterion and can be found in~\cite{BuchinskiyTreier2023}.   

    \begin{lem}\label{lemma:kotov}
        An algebraic structure $\A = \langle A, \L \rangle$ is not equationally Noetherian if and only if there is a sequence of elements $(\a_i)_{i \in \N}$, $\a_i \in A^n$, and a sequence of $\L$-equations $(s_i(\x))_{i \in \N}$, $\x = \{x_1, \ldots, x_n\}$, such that
        \begin{equation}\label{cond_from_lemma}
            \A \not \models s_i(\a_i) \text{ for all } i \in \N \text{, and } \A \models s_j(\a_i) \text{ for all } j < i.
        \end{equation}
    \end{lem}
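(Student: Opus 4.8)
The plan is to translate ``equationally Noetherian'' into a statement about solution sets and then play it against a single fixed bad system. For a system of $\L$-equations $T(\x)$, write $\mathrm{Sol}(T)$ for the set of tuples in $A^n$ satisfying every equation of $T$; two systems are equivalent exactly when $\mathrm{Sol}$ takes the same value on them. Since $T_0 \subseteq T$ forces $\mathrm{Sol}(T) \subseteq \mathrm{Sol}(T_0)$, the system $T$ fails to be equivalent to a subsystem $T_0$ if and only if some tuple satisfies every equation of $T_0$ while violating at least one equation of $T$. Both directions of the lemma follow from this remark together with a one-equation-at-a-time recursion.

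\emph{Sufficiency.} Suppose sequences $(\a_i)$ and $(s_i)$ with property (\ref{cond_from_lemma}) are given, and set $S(\x) = \{s_i(\x) : i \in \N\}$. I would show that $S$ is equivalent to no finite subsystem of itself; since $\x$ is finite, this already witnesses that $\A$ is not equationally Noetherian. Let $S_0 \subseteq S$ be finite and choose $k \in \N$ with $S_0 \subseteq \{s_j : j < k\}$. By the second half of (\ref{cond_from_lemma}), $\A \models s_j(\a_k)$ for all $j < k$, so $\a_k \in \mathrm{Sol}(S_0)$; but $\A \not\models s_k(\a_k)$ with $s_k \in S$, so $\a_k \notin \mathrm{Sol}(S)$, whence $\mathrm{Sol}(S_0) \neq \mathrm{Sol}(S)$.

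\emph{Necessity.} Conversely, assume $\A$ is not equationally Noetherian and fix a finite set $\x = \{x_1,\dots,x_n\}$ and a system $S(\x)$ that is equivalent to no finite subsystem. I would construct $s_i \in S$ and $\a_i \in A^n$ by recursion so that after stage $k$ the finite lists $s_0,\dots,s_k$ and $\a_0,\dots,\a_k$ satisfy both requirements of (\ref{cond_from_lemma}) for indices at most $k$. Given $s_0,\dots,s_{k-1}$ and $\a_0,\dots,\a_{k-1}$ (the case $k = 0$ starting from empty lists), apply the remark of the first paragraph to the finite subsystem $S_0 = \{s_0,\dots,s_{k-1}\}$, which by assumption is not equivalent to $S$: this produces $\a_k \in A^n$ with $\A \models s_j(\a_k)$ for every $j < k$ and with $\A \not\models s(\a_k)$ for some $s \in S$, and we set $s_k := s$. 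These are precisely the new instances of (\ref{cond_from_lemma}); note that $s_k$ cannot coincide with any earlier $s_j$, since $\a_k$ satisfies all of those but not $s_k$, so the recursion never terminates and $S = \{s_i : i \in \N\}$ is genuinely infinite. Letting $i$ range over all of $\N$ yields the desired sequences.

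I do not expect a real obstacle: the whole argument is the observation in the first paragraph plus routine recursion. The only points that need care are the direction of the inclusion $\mathrm{Sol}(S) \subseteq \mathrm{Sol}(S_0)$, which is what forces non-equivalence to be witnessed by a tuple failing $S$ (rather than failing $S_0$), and the automatic freshness of $s_k$ in the recursion, which is what makes the two halves of the proof dovetail.
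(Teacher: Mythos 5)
Your proof is correct; the paper itself only cites the proof of this lemma (from \cite{Kotov2013_1} and \cite{BuchinskiyTreier2023}), and the cited argument is essentially the one you give: the monotonicity $\mathrm{Sol}(S)\subseteq\mathrm{Sol}(S_0)$ for $S_0\subseteq S$ plus the greedy recursion extracting $\a_k$ and $s_k$ in one direction, and the tuple $\a_k$ witnessing non-equivalence to any finite subsystem in the other.
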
	

    \begin{figure}
        \centering
	    \begin{tikzpicture}[thick,scale=1, every node/.style={scale=1}]
            \path (-1,0) node[red node][label=180:$\a_1$] (a1) {};
            \path (1,0) node[blue node][label=0:$s_1(\x)$] (b1) {};
            \path (-1,-1) node[red node][label=180:$\a_2$] (a2) {};
            \path (1,-1) node[blue node][label=0:$s_2(\x)$] (b2) {};
            \path (-1,-2) node[red node][label=180:$\a_3$] (a3) {};
            \path (1,-2) node[blue node][label=0:$s_3(\x)$] (b3) {};
            \path (-0.5,-3) node[][label=180:\rotatebox{125}{$\ddots$}] (ldots1) {};
            \path (1.5,-3) node[][label=180:\rotatebox{125}{$\ddots$}] (rdots1) {};
            \path (-1,-4) node[red node][label=180:$\a_n$] (an) {};
            \path (1,-4) node[blue node][label=0:$s_n(\x)$] (bn) {};
            \path (-0.5,-5) node[][label=180:\rotatebox{125}{$\ddots$}] (ldots2) {};
            \path (1.5,-5) node[][label=180:\rotatebox{125}{$\ddots$}] (rdots2) {};
            \draw (a1) -- (b1) [dashed];
            \draw (a2) -- (b1);
            \draw (a2) -- (b2) [dashed];
            \draw (a3) -- (b1);
            \draw (a3) -- (b2);
            \draw (a3) -- (b3) [dashed];
            \draw (an) -- (b1);
            \draw (an) -- (b2);
            \draw (an) -- (b3);
            \draw (an) -- (bn) [dashed];
        \end{tikzpicture}
        \caption{\it An illustration to Lemma~\ref{lemma:kotov}}
    \end{figure}
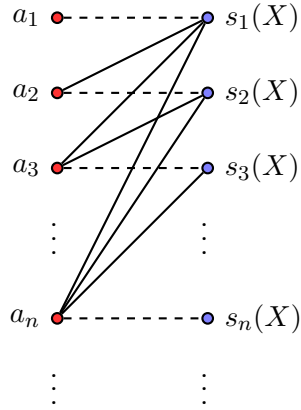

    Lemma \ref{lemma:kotov} gives us a universal description of algebraic structures that are not equationally Noetherian in terms of the satisfiability of atomic formulas in the language $\L$. The goal of this work is to translate the condition of not being equationally Noetherian for predicate structures from the language of the satisfiability of atomic formulas to the language of forbidden substructures.

\section{Preliminaries}

    Let us recall some basic concepts of algebraic geometry over algebraic structures that we will need later. We will follow the book \cite{DaniyarovaMyasnikovRemeslennikov2016}.

    A language $\L = \{P^{(n_1)}_1, \ldots, P^{(n_k)}_k\}$, where each $P^{(n_i)}_i$ is an $n_i$-ary predicate symbol, is called a \textit{predicate} language. If $\L$ and $\L'$ are two languages, and $\L \subseteq \L'$, then $\L$ is called a \textit{reduction} of $\L'$, and $\L'$ is called an \textit{expansion} of $\L$.

    Let $\A = \langle A, \L \rangle$ be an arbitrary algebraic structure in the language $\L$. The extended language $\La = \L \cup \{c_a \mid a \in A\}$ obtained from $\L$ by adding a new constant for every $a \in A$ is called a \textit{language with constants from} $A$. In this paper, by default, we will consider only such languages. Therefore, unless otherwise stated, $\A = \langle A, P^{(n)} \rangle$ will stand for an algebraic structure $\A$ with the underlying set $A$ in the language with the $n$-ary predicate symbol $P$ and the constants from $A$.

    For a language $\La$, every equation has one of the following forms: 
    \begin{enumerate}
        \item
            $P_i(w_1, \ldots, w_{n_i})$, where $i \in \{1, \ldots, k\}$, and, for all $j = 1, \ldots, n_i$, the term $w_j$ is either a constant of the language $\La$ or a variable;
        \item
            $w_1 = w_2$, where each term $w_i$, $i = 1, 2$, is either a constant of the language $\La$ or a variable.
    \end{enumerate}

    Consider a structure $\A = \langle A, \L \rangle$. A point $\a \in \A^n$ is called a \textit{solution} to an equation $s(\x)$ in $\La$ in $n$ variables $\x = \{x_1, \ldots, x_n\}$ over the structure $\A$ if $\A \models s(\a)$. A point $\a \in \A^n$ is called a \textit{solution} to a system of equations $S(\x)$ over the algebraic structure $\A$ if $\a$ is a solution to every equation of the system $S(\x)$. The set of all solutions to the system of equations $S(\x)$ is called an \textit{algebraic set} over $\A$ and is denoted by $V_{\A}(S(\x))$. Two systems of equations $S_1(\x)$ and $S_2(\x)$ in language $\L$ are called \textit{equivalent} over $\A$ if their solutions coincide.

    An algebraic structure $\A$ is called \textit{equationally Noetherian} if, for every positive integer $n$, every system of equations $S(\x)$ in $n$ variables $\x$ is equivalent to a finite subsystem $S_0(\x) \subseteq S(\x)$. 

    Equations with no variables are either always true or always false. Systems of such equations can be replaced by one false or true equation. Therefore, we will not consider systems of equations containing an infinite number of equations with no variables.

    In the next sections, we will need the following corollary from Lemma~\ref{lemma:kotov}.

    \begin{cor}[\cite{BuchinskiyTreier2023}]\label{cor:kotov}
        Let $\A$ be an algebraic structure, $\x = \{x_1, \ldots, \allowbreak x_n\}$ be a finite set of variables, and $S(\x)$ be a system of equations that is not equivalent over~$\A$ to any of its finite subsystems. Then:
        \begin{enumerate}
            \item
                There is an infinite subsystem $S' = \{s_1(\x), \ldots, s_i(\x), \ldots\}$ and a sequence of elements $(\a_i)_{i \in \N}$, $\a_i \in \A^n$ such that~(\ref{cond_from_lemma}) holds. This system is also not equivalent over $\A$ to any of its finite subsystems.
            \item
                Also, for every infinite subsystem $S'' \subset S'$ and the corresponding subsequence $(\a'_j)_{j \in \N}$ of $(\a_i)_{i \in \N}$, the condition (\ref{cond_from_lemma}) also holds. Therefore, $S''$ is also not equivalent to any of its finite subsystems.
        \end{enumerate}
    \end{cor}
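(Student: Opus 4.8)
The plan is to establish part~(1) by a recursive construction of the subsystem $S'$ together with its witness sequence, and then to obtain part~(2) by a relabelling of indices. For part~(1) I would build $s_i \in S$ and $\a_i \in \A^n$ simultaneously by recursion on $i$, maintaining the invariant that after stage $i-1$ we have $s_1, \ldots, s_{i-1} \in S$ and $\a_1, \ldots, \a_{i-1} \in \A^n$ with $\A \not\models s_j(\a_j)$ for $j \le i-1$ and $\A \models s_k(\a_j)$ whenever $k < j \le i-1$. At stage $i$, set $S_0 = \{s_1, \ldots, s_{i-1}\}$ (the empty system when $i=1$). Since $S_0 \subseteq S(\x)$ we have $V_\A(S(\x)) \subseteq V_\A(S_0(\x))$, and since $S(\x)$ is not equivalent over $\A$ to the finite subsystem $S_0(\x)$, this inclusion is proper; pick $\a_i \in V_\A(S_0(\x)) \setminus V_\A(S(\x))$. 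Then $\A \models s_j(\a_i)$ for every $j < i$, while $\a_i \notin V_\A(S(\x))$ yields some $s_i \in S$ with $\A \not\models s_i(\a_i)$, and necessarily $s_i \notin \{s_1, \ldots, s_{i-1}\}$. The recursion never halts, since each stage only uses that $S$ is not equivalent to a finite subsystem, so $S' = \{s_i : i \in \N\}$ is an infinite subsystem and $(\a_i)_{i \in \N}$ together with $S'$ satisfies~(\ref{cond_from_lemma}).

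To see that $S'$ is not equivalent over $\A$ to any of its finite subsystems, I would argue directly, as in the ``only if'' part of Lemma~\ref{lemma:kotov}: were a finite subsystem $\{s_{i_1}, \ldots, s_{i_m}\} \subseteq S'$ with $N = \max_\ell i_\ell$ equivalent to $S'$, then $\a_{N+1}$ satisfies every $s_{i_\ell}$ (since $i_\ell \le N$), hence $\a_{N+1} \in V_\A(\{s_{i_1}, \ldots, s_{i_m}\}) = V_\A(S')$, contradicting $\A \not\models s_{N+1}(\a_{N+1})$ with $s_{N+1} \in S'$. For part~(2), write $S'' = \{s_{i_1}(\x), s_{i_2}(\x), \ldots\}$ with $i_1 < i_2 < \cdots$ and put $\a'_j = \a_{i_j}$. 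Condition~(\ref{cond_from_lemma}) for $S'$ gives $\A \not\models s_{i_j}(\a'_j)$ for all $j$, and $k < j$ forces $i_k < i_j$, so $\A \models s_{i_k}(\a_{i_j}) = s_{i_k}(\a'_j)$; thus $S''$ together with $(\a'_j)_{j \in \N}$ satisfies~(\ref{cond_from_lemma}), and the contradiction argument above, now applied inside $S''$, shows that $S''$ is not equivalent over $\A$ to any of its finite subsystems.

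I do not expect a genuine obstacle here, as the construction is the same bookkeeping that already underlies Lemma~\ref{lemma:kotov}. The only point needing care is to keep the two indexings apart: the external index $i$ labelling the members of $S'$ versus the internal reindexing $1, 2, \ldots$ imposed by~(\ref{cond_from_lemma}); and to note that the freshly chosen $s_i$ is never one of $s_1, \ldots, s_{i-1}$, which comes for free because $\a_i$ already satisfies all the earlier equations.
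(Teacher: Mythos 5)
Your proposal is correct, and it is the standard greedy construction (pick a point in $V_\A(\{s_1,\dots,s_{i-1}\})\setminus V_\A(S)$, then an equation it falsifies) that underlies Lemma~\ref{lemma:kotov}; the paper itself states this corollary without proof, citing \cite{BuchinskiyTreier2023}, and your argument matches the intended one. Both the freshness of each $s_i$ and the reindexing in part~(2) are handled correctly, so there is nothing to add.
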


\section{Equationally Noetherian predicate algebraic structures}

    In this section, we formulate and prove a criterion for predicate algebraic structures to be equationally Noetherian.

    \subsection{Configurations of predicate equations}

        The following proposition holds for predicate algebraic structures with a finite number of predicates.

        \begin{prop}\label{state:onevariable}
            Let $\L$ be a predicate language with a finite number of predicate symbols. Denote these predicates by $P_1^{(n_1)}, \ldots, P_m^{(n_m)}$, and denote the set of constant symbols by $C$. Let $X$ be a finite set of variables, and $S(X)$ be an infinite system of equations in $\L$. Let $S(X)$ not contain infinite subsystems of equations with no variables. Then at least one of the following conditions is satisfied:
            \begin{itemize}
                \item
                    there is a predicate symbol $P_i^{n_i} \in \L$ and there is an infinite subsystem of equations $S_{P_i} \subseteq S$ consisting of equations of the form $P_i(w_1, \ldots, w_{n_i})$, where $w_j \in X \cup C$;
                \item 
                    there is an infinite subsystem of equations $S_{=} \subseteq S$ consisting of equations of the form $w_1 = w_2$, where either $w_1 \in X$ and  $w_2 \in C$ or $w_1 \in C$ and  $w_2 \in C$.
            \end{itemize}
        \end{prop}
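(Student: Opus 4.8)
The plan is a two-level pigeonhole argument on the syntactic shape of the equations in $S(X)$. Recall that, over $\La$ with predicate symbols $P_1^{(n_1)},\ldots,P_m^{(n_m)}$ and equality, each equation is of exactly one of $m+1$ mutually exclusive types: for some $i\in\{1,\ldots,m\}$ it is $P_i(w_1,\ldots,w_{n_i})$ with every $w_j\in X\cup C$, or it is $w_1=w_2$ with $w_1,w_2\in X\cup C$. Since $S(X)$ is infinite, one of these $m+1$ types already contains infinitely many equations of $S(X)$.

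First I would dispose of the case in which that infinite type is $P_i(\cdot)$ for some fixed $i$. Then the set $S_{P_i}$ of all equations of $S(X)$ of that form is an infinite subsystem whose members all have arguments in $X\cup C$, which is exactly the first alternative of the proposition, and we are done. Hence from now on I may assume instead that infinitely many equations of $S(X)$ have the form $w_1=w_2$ with $w_1,w_2\in X\cup C$; call this infinite subsystem $S'$.

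Next I would refine $S'$ according to how many variables an equation contains. Because $X$ is finite, there are at most $|X|^2$ equations $w_1=w_2$ with both $w_1,w_2\in X$, so only finitely many members of $S'$ contain two variables; and by the standing assumption that $S(X)$ has no infinite subsystem of variable-free equations, only finitely many members of $S'$ contain no variable. Deleting these two finite families from the infinite $S'$ leaves an infinite family of equations each containing exactly one variable, i.e.\ each of the form $x=c$ or $c=x$ with $x\in X$ and $c\in C$. Using the symmetry of $=$ to rewrite every $c=x$ as $x=c$ (which changes neither the solution set nor the cardinality of the system) produces an infinite system all of whose members have the form $w_1=w_2$ with $w_1\in X$ and $w_2\in C$; taking this system as $S_=$ yields the second alternative.

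I do not expect a genuine obstacle here — the argument is essentially bookkeeping. The two places that need a moment's care are the use of the finiteness of $X$, which is precisely what prevents an infinite supply of two-variable equalities (a shape the second alternative does not admit), and the harmless identification of the syntactic forms $x=c$ and $c=x$ when assembling $S_=$. The standing convention about variable-free equations enters only to discard the remaining constant-only equalities; note that, since the second alternative already permits members of the form $c_1=c_2$, even an infinite block of those would still place us in the second case.
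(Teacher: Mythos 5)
Your argument is correct and is essentially the paper's own proof: both are a two-stage pigeonhole on the syntactic type of the equations, using the finiteness of $X$ (to bound the constant-free, respectively two-variable, equalities), the hypothesis on variable-free equations, and the finiteness of the predicate signature. The only difference is cosmetic --- the paper first splits $S$ by variable/constant content and then by predicate symbol, while you split by predicate symbol first --- and your remark that $c=x$ may be harmlessly read as $x=c$ is a detail the paper also passes over silently.
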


        \begin{proof}
            Let $S = S_C \cup S_X \cup S_{\x, C}$, where the subsystem $S_{\x, C}$ consists of equations that have variables and constants at the same time, $S_X$ consists of equations with no constants, and $S_C$ consists of equations with no variables. Because the number of variables is finite, the subsystem $S^X$ is finite. The fact that $S_C$ is finite follows from the assumption of the proposition. Therefore, the subsystem $S_{\x, C}$ is infinite. Let $S_{\x,C} = S_{P_1} \cup S_{P_2} \cup \ldots \cup S_{P_m} \cup S_{=}$,  where $S_{P_i}$ consists of equations  of the form $P_i(w_1, \ldots, w_{n_i})$, and $S_{=}$ consists of $w_1 = w_2$. Since $S_{\x, C}$ is infinite, at least one of the subsystems $S_{P_1}, S_{P_2}, \ldots, S_{P_m}, S_{=}$ is infinite.
        \end{proof}

        \begin{rem}
            Let a predicate language $\L$ consist of constants and one unary predicate. Then, every $\L$-structure is equationally Noetherian.
        \end{rem}

        Let $\L$ be a predicate language. Let $Q_1^{(n)}$ and $Q_2^{(k)}$ be predicate symbols of $\L$, $C$ be the set of constant symbols of $\L$, and $X$ be a finite set of variables. We will say that two equations $Q_1(v_1, \ldots, v_n)$ and $Q_2(w_1, \ldots, w_k)$, where  $v_i, w_j \in X \cup C$, \textit{have the same configuration} if the following conditions hold: 
        \begin{enumerate}
            \item 
                the predicate symbols $Q_1^{(n)}$ and $Q_2^{(k)}$ coincide, and, therefore, $n = k$; 
            \item 
                for all $i = 1, \ldots, n$, either $v_i$ and $w_i$ are constants, or $v_i$ and $w_i$ are the same variable.
        \end{enumerate}

        \begin{rem}\label{comment:finite}
            For a predicate symbol $Q$ and a finite set of variables $X$, there are only a finite number of pairwise different configurations of equations of the form $Q(v_1, \ldots, v_n)$ in the variables $X$.
        \end{rem}

        The following lemma allows us to consider only predicate equations that have the same configuration.

        \begin{lem}\label{theorem:basic}
            Let $\A = \langle A, \La \rangle$ be a predicate structure with constants from $A$ and a finite number of predicates, and let $\A$ not be equationally Noetherian. Then, there are infinite sequences of elements $\{(a_1^i, \ldots, a_p^i)\}_{i \in \N}$ and equations $S = \{s_i(X)\}_{i \in \N}$ of the same configuration in a finite set of variables $X$ such that~(\ref{cond_from_lemma}) holds.
        \end{lem}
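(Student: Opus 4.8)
Since $\A$ is not equationally Noetherian, apply Lemma~\ref{lemma:kotov} (or Corollary~\ref{cor:kotov}) to obtain a finite set of variables $X$, an infinite system of equations $S = \{t_i(X)\}_{i\in\N}$ and a sequence of points $(\a_i)_{i\in\N}$, $\a_i \in A^{|X|}$, such that condition~(\ref{cond_from_lemma}) holds: $\A \not\models t_i(\a_i)$ for all $i$, and $\A \models t_j(\a_i)$ for all $j < i$. By Corollary~\ref{cor:kotov}(2), the same will remain true for any infinite subsystem together with the corresponding subsequence of points, so it suffices to find an infinite subsystem all of whose equations have the same configuration and then pass to it.

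First I would discard the trivial cases. The system $S$ has finitely many equations with no variables (an infinite subsystem of such equations would make $S$ equivalent to a finite subsystem — or, in the terminology of the excerpt, is excluded by hypothesis), so after removing them we may assume every $t_i(X)$ contains at least one variable. Now apply Proposition~\ref{state:onevariable} to $S$: either there is a predicate symbol $P_i$ and an infinite subsystem $S_{P_i}\subseteq S$ all of whose equations have the form $P_i(w_1,\ldots,w_{n_i})$ with $w_j \in X\cup C$, or there is an infinite subsystem $S_{=}$ all of whose equations have the form $w_1 = w_2$ with $w_1 \in X$, $w_2\in C$ or both in $C$. In the second case the equality predicate $=$ plays the role of the common predicate symbol; in the first case it is $P_i$.

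Next, fix that infinite subsystem (call it $S'$) and the common predicate symbol $Q$ (either $P_i$ or $=$). Every equation of $S'$ is of the form $Q(v_1,\ldots,v_n)$ with each $v_\ell \in X\cup C$, and two such equations have the same configuration precisely when, for each position $\ell$, either both entries are constants or both are one and the same variable. By Remark~\ref{comment:finite} there are only finitely many configurations available for equations in the predicate $Q$ over the fixed finite variable set $X$. Hence $S'$ is partitioned into finitely many classes according to configuration, and by the pigeonhole principle at least one class is infinite; call it $S''$. Then $S''$ consists of equations all of the same configuration.

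Finally, restrict the sequence of points to the subsequence $(\a'_i)_{i\in\N}$ indexed by the equations retained in $S''$. By Corollary~\ref{cor:kotov}(2), condition~(\ref{cond_from_lemma}) still holds for $S''$ and $(\a'_i)_{i\in\N}$. Relabeling $S'' = \{s_i(X)\}_{i\in\N}$ and writing $\a'_i = (a_1^i,\ldots,a_p^i)$ with $p = |X|$ gives exactly the conclusion of the lemma. The only place requiring any care — the ``main obstacle'', though it is mild — is the bookkeeping to ensure that at each passage to a subsystem the associated subsequence of points still witnesses~(\ref{cond_from_lemma}); this is precisely what Corollary~\ref{cor:kotov}(2) guarantees, so no further argument is needed.
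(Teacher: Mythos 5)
Your proposal is correct and follows essentially the same route as the paper's proof: invoke Corollary~\ref{cor:kotov} to get a system and point sequence satisfying~(\ref{cond_from_lemma}), apply Proposition~\ref{state:onevariable} to extract an infinite subsystem in a single predicate symbol, and then use Remark~\ref{comment:finite} with the pigeonhole principle to pass to an infinite subsystem of a single configuration, with Corollary~\ref{cor:kotov}(2) preserving~(\ref{cond_from_lemma}) at each step. Your explicit treatment of the equality-equation case and of variable-free equations is slightly more careful than the paper's, but it is the same argument.
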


        \begin{proof}
            Since $\A$ is not equationally Noetherian, it follows from Corollary~\ref{cor:kotov} that there are sequences of elements $(a_i)_{i \in \N}$ and of equations $S(\x)$ such that~(\ref{lemma:kotov}) holds. From Proposition~\ref{state:onevariable}, there is an infinite subsystem $S' \subseteq S$ such that all its equations contain one predicate $P^{(n)}$ of $\La$, i.e., $S' = \{P(w_1^i, \ldots, w_{n}^i)\}_{i \in \N}$, where $w_j^i \in X \cup A$. According to Remark~\ref{comment:finite}, there is an infinite subsystem $S'' \subseteq S'$ such that it is not equivalent to any of its finite subsystems and consists of predicate equations of the same configuration.
        \end{proof}

        It follows from Lemma~\ref{theorem:basic} that, without loss of generality, we can assume that all equations of the form $P(v_1, \ldots, v_n)$, where $P^{(n)}$ is a predicate symbol of $\La$, have the form $P(x_1, \ldots,x_p, b_1, \ldots, b_t)$, where $p + t = n$, $x_1, \ldots, x_p$ are variables, and $b_1, \ldots, b_t$ are constant symbols of the language $\La$.

    \subsection{Projections of predicates and gluings of predicates}
    
        In this subsection, we give two ways to construct a new predicate from an existing predicate.

        \begin{defi}
            Let $P^{(n)}$ be an arbitrary $n$-ary predicate, $I = \{i_1, i_2, \ldots,\allowbreak i_k\} \subset \{1, 2, \ldots n\}$, $0 < k < n$, and $J = \{1, 2, \ldots n\} \setminus I = \{l_1, l_2, \ldots, l_{n - k}\}$. A predicate $P'^{(k)}$ is called the \textit{projection of the predicate $P$ onto the set of components $I$ by using elements $p_{1}, p_{2}, \ldots, p_{n-k} \in A$} if, for all $a_{1}, a_{2}, \ldots, a_{k} \in A$, 
            \begin{equation*}
                \A \models P'(a_{1}, \ldots, a_{k}) \iff \A \models P(c_1, \ldots, c_n),
            \end{equation*}
            where $c_{i_j} = a_j$ if $i_j \in I$, for all $j = 1,2, \ldots, k$, and $c_{l_j} = p_j$ if $l_j \in J$ for all $j = 1, 2, \ldots, n - k$.
        \end{defi}

        In other words, a projection of a predicate $P$ is fixing some arguments of the predicate $P$ by given constants from the underlying set of the algebraic structure.

        We use the name ``a projection of a predicate'' because it is ideologically similar to the notion of a projection of a relation from database theory~\cite{Date2003}.

        \begin{exa}\label{example_projection}
            Let $\Gamma = \langle \{v_1, v_2, v_3, v_4, v_5\}, E^{(3)} \rangle$ be a hypergraph with 5 nodes, where $3$-hyperedges are triples $(v_1, v_1, v_2), (v_1, v_3, v_3), (v_2, v_4, v_1),\allowbreak (v_3, v_2, v_2), (v_5,\allowbreak v_4, v_5)$. Then, for example, the projection of the predicate $E$ onto the set of components $I = \{1, 3\}$ by the element $v_4$ is the binary predicate that is true only for pairs $(v_2, v_1), (v_5, v_5)$.
        \end{exa}

        \begin{defi}
            Let $P^{(n)}$ be an $n$-ary predicate, and $I = \bigsqcup_{j = 1}^m I_j$ be an exact partition of the set $\{1, \ldots, n\}$. A predicate $\sfrac{P}{I}^{(m)}$, $m \leq n$, is called the \textit{gluing of the predicate $P$ by the partition $I$} if the following condition holds:
            \begin{equation*}
                \A \models \sfrac{P}{I}(a_1, \ldots, a_m) \iff \A \models P(b_1, \ldots, b_n),
            \end{equation*}   
            where $b_i = a_k$ if and only if $i \in I_k$.
        \end{defi}

        \begin{exa}
            Consider the hypergraph $\Gamma$ from Example~\ref{example_projection}. Let $I = \{\{1\}, \{2, 3\}\}$ be an exact partition of the set $\{1, 2, 3\}$. Then $\sfrac{E}{I}$ is the binary predicate that is true only for pairs $(v_1, v_3), (v_3, v_2)$.
        \end{exa}

    \subsection{Perfectly non-Noetherian structures}
    
        The notion of a perfectly non-Noetherian substructure plays a key role in our criterion for structures to be equationally Noetherian. In fact, this object is a forbidden substructure for the equationally Noetherian property.

        \begin{defi}\label{def_cns} 
		    Let $P$ be an $n$-ary predicate symbol of a language $\La$. We  say that an algebraic structure $\A = \langle A, \La \rangle$ contains a {\it $P$-perfectly non-Noetherian substructure} if there are sequences of elements $\{(a^i_1, \ldots, a^i_p)\}_{i \in \N}$ and equations $\{P(x_1, \ldots, x_p, b^i_1, \ldots, b^i_t)\}_{i \in \N}$ such that 
            \begin{enumerate}
                \item 
                    $p + t = n$;
                \item
                    $a^1_1, \ldots, a^1_p,  b^1_1, \ldots b^1_t, \ldots, \allowbreak a^k_1, \ldots, a^k_p,  b^k_1, \ldots, b^k_t, \ldots$  are pairwise different;
                \item 
                    $\A \not\models P(a^i_1, \ldots, a^i_p, b^i_1, \ldots, b^i_t)$ for all $i$;
                \item
                    $\A \models P(a^i_1, \ldots, a^i_p, b^j_1, \ldots, b^j_t)$ for all  $j < i$.
            \end{enumerate}
            To be short, we will sometimes say ``a \textit{completely non-Noetherian substructure}'' instead of ``a $P$-completely non-Noetherian substructure'' if $\La$ has only one predicate symbol $P$.
        \end{defi}

        \begin{exa}
            A base non-Noetherian graph mentioned in the paper~\cite{BuchinskiyTreier2023} contains a perfectly non-Noetherian substructure. Also, note that the property of containing a perfectly non-Noetherian substructure for graphs coincides with the notion to be a perfectly non-Noetherian graph from \cite{BuchinskiyTreier2023}.
    
            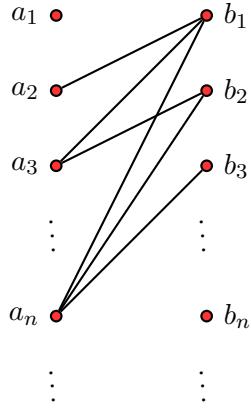
\begin{figure}
	            \centering
                \begin{tikzpicture}[thick,scale=1, every node/.style={scale=1}]
                    \path (-1,0) node[red node][label=180:$a_1$] (a1) {};
                    \path (1,0) node[red node][label=0:$b_1$] (b1) {};
                    \path (-1,-1) node[red node][label=180:$a_2$] (a2) {};
                    \path (1,-1) node[red node][label=0:$b_2$] (b2) {};
                    \path (-1,-2) node[red node][label=180:$a_3$] (a3) {};
                    \path (1,-2) node[red node][label=0:$b_3$] (b3) {};
                    \path (-0.5,-3) node[][label=180:\rotatebox{130}{$\ddots$}] (ldots1) {};
                    \path (1.5,-3) node[][label=180:\rotatebox{130}{$\ddots$}] (rdots1) {};
                    \path (-1,-4) node[red node][label=180:$a_n$] (an) {};
                    \path (1,-4) node[red node][label=0:$b_n$] (bn) {};
                    \path (-0.5,-5) node[][label=180:\rotatebox{130}{$\ddots$}] (ldots2) {};
                    \path (1.5,-5) node[][label=180:\rotatebox{130}{$\ddots$}] (rdots2) {};
                    \draw (a2) -- (b1);
                    \draw (a3) -- (b1);
                    \draw (a3) -- (b2);
                    \draw (an) -- (b1);
                    \draw (an) -- (b2);
                    \draw (an) -- (b3);
                \end{tikzpicture}
                \caption{\it The base non-Noetherian graph}
            \end{figure}
        \end{exa}

        Let us highlight the following special sort of non-Noetherian structures that arise for irreflexive binary predicates. 

        \begin{defi}\label{def:clique}
            We say that a predicate algebraic structure $\A = \langle A, P^{(2)} \rangle$ \textit{contains a non-Noetherian clique} if there is a sequence of elements $\{a_i\}_{i \in \N} \subseteq A$ such that $\A \not\models P(a_i, a_i)$ for all $i$ and $\A \models P(a_i, a_j)$ for all $j < i$. 
        \end{defi}

        This notion is similar to the notion of a perfectly non-Noetherian substructure introduced above. The difference is that the sequences of elements $\{a_i\}_{i \in \N}$ and $\{b_i\}_{i \in \N}$ from Definition~\ref{def_cns} coincide.
 
        \begin{rem}\label{comment:klika}
            If an algebraic structure $\A = \langle A, P^{(2)} \rangle$ has a non-Noetherian clique, then it is not equationally Noetherian in the language with constants. Indeed, the sequences of elements $\{a_i\}_{i\in\N}$ and equations $\{P(x, a_i)\}_{i \in \N}$ satisfy the condition~(\ref{cond_from_lemma}) from Lemma~\ref{lemma:kotov}.
        \end{rem}

        An example of a non-Noetherian clique is a countable clique for simple graphs. It is easy to see that, for simple graphs, every infinite clique is a non-Noetherian graph.

    \subsection{Criterion for a predicate structure to be equationally Noetherian}

        Let $D = \{(d_1^i, \ldots, d_n^i)\}_{i \in \N}$ be an arbitrary sequence of tuples of length $n$ of elements of some infinite set $A$. Denote by $C_j(D)$ the set of elements in the $j$-th column.

        In this subsection, we will need the following technical lemma.

        \begin{lem}\label{lemma_tuple}
            Let $D = \{(d_1^i, \ldots, d_n^i)\}_{i \in \N}$ be a sequence of tuples of elements of some infinite set $A$. Then, there is a subsequence $\Tilde{D} \subseteq D$ such that
            \begin{enumerate}
                \item  
                    for each $j = 1, 2, \ldots, n$, $C_j(\Tilde{D})$ contains either only one element or infinitely many pairwise different elements;
                \item 
                    any two columns either coincide or have no common elements.
            \end{enumerate}
        \end{lem}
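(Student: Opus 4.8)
The plan is to extract the desired subsequence in two stages, using an infinite Ramsey-type (pigeonhole) argument on the columns one at a time, and then refining once more to control the coincidence pattern among columns.

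\emph{First stage: making each column either constant or fully "spread out".} I proceed by induction on $j$ from $1$ to $n$. Suppose we have already passed to a subsequence $D^{(j-1)} \subseteq D$ (with $D^{(0)} = D$) all of whose first $j-1$ columns are either constant or consist of infinitely many pairwise distinct elements. Look at column $j$ of $D^{(j-1)}$. Either some single value $a \in A$ occurs infinitely often in that column — in which case I pass to the subsequence of those tuples and column $j$ becomes constant — or every value occurs only finitely often, in which case a standard greedy extraction (repeatedly pick the next tuple whose $j$-th entry differs from all previously chosen $j$-th entries; this is always possible because no value is used up infinitely) yields an infinite subsequence whose $j$-th column has all entries pairwise distinct. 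Crucially, passing to a further subsequence never destroys the property established for columns $1, \ldots, j-1$: a constant column stays constant, and an infinite-and-pairwise-distinct column stays pairwise distinct (it might lose "infinitely many distinct" only if it became finite, but an infinite subsequence of a sequence with infinitely many distinct values... here I must be slightly careful — see below). After $n$ steps set $D' = D^{(n)}$; every column of $D'$ is either constant or has infinitely many pairwise distinct entries.

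\emph{Second stage: controlling coincidences between columns.} Now work with $D'$. For each pair $(j,k)$ with $1 \le j < k \le n$, consider the set of indices $i$ with $d_j^i = d_k^i$. By the infinite pigeonhole principle applied successively over the finitely many pairs, I can pass to a further subsequence $\Tilde D \subseteq D'$ on which, for each pair $(j,k)$, \emph{either} $d_j^i = d_k^i$ holds for all $i$ \emph{or} it holds for no $i$. It remains to check this yields condition (2) as stated, namely that any two columns of $\Tilde D$ either coincide (as sets $C_j, C_k$) or are disjoint. If columns $j$ and $k$ fall in the "equal on every index" case, then $d_j^i = d_k^i$ for all $i$, so $C_j(\Tilde D) = C_k(\Tilde D)$. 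If they fall in the "never equal" case, I claim $C_j(\Tilde D) \cap C_k(\Tilde D) = \varnothing$: indeed, if both columns are constant this is immediate ($d_j^1 \ne d_k^1$); if at least one, say column $j$, is spread out, then a common element would force $d_j^i = d_k^{i'}$ for some $i \ne i'$, and one then rules this out by noting that the relation "$d_j^i = d_k^{i'}$" among the infinitely many distinct values, after one more pigeonhole refinement, can be assumed to hold either always-with-equal-index (the previous case) or never — so I should actually fold this finer extraction into the second stage rather than treating it separately.

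\emph{Where the difficulty lies.} The routine extractions are straightforward; the subtle point is the interaction between the two stages — I must make sure the "infinitely many pairwise distinct" property from Stage 1 survives Stage 2. It does, because any infinite subsequence of a column that already had all entries pairwise distinct still has all entries pairwise distinct, and it is still infinite since we only ever pass to infinite subsequences. The second genuinely delicate item is handling possible "cross-coincidences" $d_j^i = d_k^{i'}$ with $i \ne i'$ when building condition (2); the clean fix is to perform, for every ordered pair $(j,k)$, a single additional pigeonhole step on the infinitely many values of the spread-out column to arrange that each value $d_j^i$ either equals $d_k^i$ or differs from $d_k^{i'}$ for all $i'$. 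Since there are only finitely many pairs and finitely many columns, all these refinements are carried out in finitely many passes, each preserving everything established before, and $\Tilde D$ is the result of the final pass.
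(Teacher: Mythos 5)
Your proposal follows essentially the same route as the paper: a column-by-column pigeonhole refinement making each column either constant or injective (pairwise distinct entries), followed by a pairwise treatment of coincidences between columns. The one place where you stop short of an argument is exactly the step on which the paper spends most of its effort: eliminating the cross-index coincidences $d_j^i = d_k^{i'}$ with $i \ne i'$. You correctly identify this as the delicate point, but then only assert that ``one more pigeonhole refinement'' handles it; plain pigeonhole on values does not obviously produce the required subsequence. What makes the extraction work is the observation that, once column $k$ is injective, a fixed element $d_j^i$ can occur in column $k$ at most once; hence each row conflicts with only boundedly many other rows (at most two per unordered pair of columns), and a greedy deletion --- process the rows in order, keep the current row, discard the finitely many rows it conflicts with --- leaves an infinite subsequence with no cross-coincidences, whence the ``never equal at the same index'' columns become genuinely disjoint as sets. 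This is precisely the paper's argument for $n=2$, iterated over all pairs of columns. With that step spelled out (and the easy subcase of a constant column meeting an injective column, fixed by deleting at most one row), your proof is complete and coincides with the paper's.
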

        
        \begin{proof}
            If $|C_1(D)| < \infty$, then choose a subsequence $D_1$ of $D$ such that $|C_1(D_1)| = 1$. If $|C_1(D)| = \infty$, then choose a subsequence $D_1$ of $D$ such that all the elements in the column $C_1(D_1)$ are pairwise different.  Perform this procedure for the second column of the sequence $D_1$, i.e., choose from $D_1$ a subsequence $D_2$ such that $C_2(D_2)$ consists of either only one element or pairwise different elements. Perform this procedure for all the columns. After that, we obtain a subsequence $D_n = \{(d_1^{i}, d_2^{i}, \ldots, d_n^{i})\}_{i \in \N}$ of $D$ such that every column either consists of one element or all the elements are pairwise different. Without loss of generality, we will assume that there is no column consisting of one element in the sequence $D_n$. Now, let us prove that it is possible to choose a subsequence of  $D_n$ such that every two columns either coincide or have no common elements.

            Let $n = 2$. If the subsequence $D_2$ has infinitely many rows in which the elements of two columns $d_1^{i}$ and $d_2^{i}$ are equal, then choose the $D_{2, =}$ consisting of all these rows. Let there be only a finite number of such rows. Then, consider the subsequence $D_{2, \ne}$ that does not contain the rows such that $d_1^{i} = d_2^{i}$. Consider the first row $(d_1^{1}, d_2^{1})$ of $D_{2, \ne}$. If the element $d_1^{1}$ is in the sequence $D_{2, \ne}$, then it can be only in the second column and at most one time. Let it be in the $j$-th row. Then, remove this row from $D_{2, \ne}$. Repeat the same procedure for the element $d_2^{1}$, i.e., if it is in the $k$-th row in the first column, then, if $k \ne j$, remove this row from $D_{2, \ne}$. Go to the next row in $D_{2, \ne}$. Performing a similar procedure, we remove at most two rows from $D_{2, \ne}$. Note that the first row will not be removed. Performing this procedure for all the rows, we obtain the subsequence $D_{2, \ne}$ with only pairwise different elements.

            If $n > 2$, then we perform the procedure described above for all the pairs of columns. At each step for a selected pair of columns in the current subsequence $D'$ of $D_n$, we choose a new subsequence $D''$ and, for the next pair of columns, we perform the procedure on the subsequence $D''$. After performing the procedure on all the pairs of columns, we obtain a subsequence $\Tilde{D}$ that satisfies the condition of the lemma.
        \end{proof}

        To prove the main result of this paper, Theorem~\ref{th:main}, we need the following lemma. This lemma is a criterion for a predicate structure with one predicate to be equationally Noetherian.

        \begin{lem}\label{theorem_1P}
            An algebraic structure $\A = \langle A, P^{(n)} \rangle$ in a language with one predicate symbol $P^{(n)}$ and constants from $A$ is not equationally Noetherian if and only if there is a projection $P'^{(k)}$ of the predicate $P^{(n)}$ and an exact partition $I$ of $\{1, \ldots, k\}$ such that at least one of the following conditions is true:
            \begin{itemize}
                \item
                    $|I| > 1$ and the algebraic structure $\A' = \langle A, \sfrac{P'}{I} \rangle$ has a perfectly non-Noetherian substructure;
                \item 
                    $|I| = 1$ and the algebraic structure $\A' = \langle A, Q \rangle$, where $Q = \sfrac{P'}{\{\{1\}, \{2, \ldots, k\}\}}$, has a non-Noetherian clique.
            \end{itemize}
        \end{lem}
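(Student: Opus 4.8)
The plan is to prove the two implications separately; the ``if'' direction is a direct computation, while the ``only if'' direction carries the weight.

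For the ``if'' direction, suppose we are given a projection $P'^{(k)}$ of $P$ -- fixing the coordinates in $J=\{l_1,\ldots,l_{n-k}\}$ to elements $p_1,\ldots,p_{n-k}$ -- and an exact partition $I=\{I_1,\ldots,I_m\}$ of $\{1,\ldots,k\}$ with one of the two stated properties. Unwinding the definitions of projection and of gluing, an atomic formula $\sfrac{P'}{I}(w_1,\ldots,w_m)$ holds in $\A$ if and only if $P(c_1,\ldots,c_n)$ does, where $c_{l_j}=p_j$ and $c_{i_r}=w_b$ whenever $r$ lies in the block $I_b$. Hence every $\sfrac{P'}{I}$-equation of the form $\sfrac{P'}{I}(x_1,\ldots,x_{p'},\beta_1,\ldots,\beta_{t'})$ ``unfolds'' to a genuine $\La$-equation $P(c_1,\ldots,c_n)$ in which each $c_\ell$ is one of $x_1,\ldots,x_{p'}$ or a constant (the same variable may appear in several coordinates, and several coordinates may be fixed to the same constant), and this unfolding commutes with substituting elements for the variables. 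In the first case, the sequences of elements and equations exhibiting a perfectly non-Noetherian substructure of $\langle A,\sfrac{P'}{I}\rangle$ unfold to a sequence $(\a_i)$ in $A^{p'}$ and a sequence of $\La$-equations $(s_i(\x))$, and conditions (3) and (4) of Definition~\ref{def_cns} become exactly~(\ref{cond_from_lemma}); so $\A$ is not equationally Noetherian by Lemma~\ref{lemma:kotov}. In the second case ($|I|=1$), a non-Noetherian clique $\{a_i\}$ of $\langle A,Q\rangle$ gives the points $\{a_i\}_{i\in\N}$ and the $Q$-equations $\{Q(x,a_i)\}_{i\in\N}$ satisfying~(\ref{cond_from_lemma}) over $\langle A,Q\rangle$ (see Remark~\ref{comment:klika}); unfolding the $Q$-equations to $P$-equations as above, and using that $Q$ is a gluing of a projection of $P$, one gets~(\ref{cond_from_lemma}) over $\A$, so again $\A$ is not equationally Noetherian by Lemma~\ref{lemma:kotov}.

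For the ``only if'' direction, assume $\A$ is not equationally Noetherian. By Lemma~\ref{theorem:basic} and the remark following it, fix an infinite system $S=\{P(x_1,\ldots,x_p,b_1^i,\ldots,b_t^i)\}_{i\in\N}$ of equations of one common configuration, together with a sequence $\{(a_1^i,\ldots,a_p^i)\}_{i\in\N}$ in $A^p$, for which~(\ref{cond_from_lemma}) holds, where $p+t=n$. Call $\{a_j^i\}_{i\in\N}$ the $j$-th \emph{variable column} and $\{b_j^i\}_{i\in\N}$ the $j$-th \emph{constant column}. By Corollary~\ref{cor:kotov} one may freely pass to infinite subsequences, so apply Lemma~\ref{lemma_tuple} to the sequence of length-$(p+t)$ tuples assembled from these entries to arrange that every column is either a singleton or infinite with pairwise distinct entries, and that any two columns either coincide or are disjoint. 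Then eliminate degeneracies by two arity-reducing moves, each of which (again by Corollary~\ref{cor:kotov}) preserves the property of witnessing non-equational-Noetherianness and replaces the predicate by a gluing of a projection of it -- so the predicate stays a gluing of a projection of $P$, that is, of the form $\sfrac{P'}{I}$: (a) if a column is a singleton $\{e\}$, fix that coordinate to $e$, which for a variable column just deletes the corresponding variable; (b) glue two coordinates whenever they carry the same variable, or are both variable coordinates whose columns coincide (after identifying the two variables), or are both constant coordinates whose columns coincide. At least one variable column and at least one constant column always survive: a configuration with no constant coordinate forces all the $s_i$ to coincide, and one with no variable coordinate turns them into sentences, each contradicting~(\ref{cond_from_lemma}).

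After these reductions, two situations remain. If no surviving variable column coincides with a surviving constant column, then all columns are pairwise disjoint, the current predicate $\sfrac{P'}{I}$ has arity at least $2$ (so $|I|>1$), and the current system together with the current points literally \emph{is} a perfectly non-Noetherian substructure of $\langle A,\sfrac{P'}{I}\rangle$ -- conditions (1), (3), (4) of Definition~\ref{def_cns} come from the arity count and from~(\ref{cond_from_lemma}), and condition (2) is exactly pairwise disjointness of the columns. If instead some variable column still coincides with some constant column, the perfectly-non-Noetherian route is blocked, since condition (2) would then fail; here I would argue that this situation forces the witness, after a suitable re-indexing, into the special ``clique'' shape, extract an appropriate projection $P'$, and read off a non-Noetherian clique of $\langle A,Q\rangle$ with $Q=\sfrac{P'}{\{\{1\},\{2,\ldots,k\}\}}$ (this may need a further dichotomy, such as a Ramsey-type argument deciding whether the relation is ``lower triangular'' on some subsequence, with a desynchronizing re-indexing in one branch and the clique in the other). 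I expect this last point -- controlling the interaction between ``variable'' and ``constant'' columns, that is, securing condition (2) of Definition~\ref{def_cns} that the substituted elements be distinct from all constants occurring in the equations -- to be the main obstacle; everything earlier (Lemma~\ref{theorem:basic}, Lemma~\ref{lemma_tuple}, the two reduction moves) is precisely preparation for it, and the delicate part is to verify that the dichotomy ``perfectly non-Noetherian substructure'' versus ``non-Noetherian clique'' in the statement is genuinely exhaustive.
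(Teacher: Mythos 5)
Your proposal tracks the paper's own proof almost step for step --- Lemma~\ref{theorem:basic}, then Lemma~\ref{lemma_tuple}, then projecting away the singleton columns and gluing the coinciding ones, with the ``if'' direction handled by unfolding the gluing and projection back into $P$-equations --- and all of that, including the observation that at least one variable column and one constant column must survive, is sound. But the proof stops being a proof at exactly the point you flag: the case where a surviving variable column coincides with a surviving constant column while other columns remain. That case is not vacuous. Take $A=\{c_i,a_i\mid i\in\N\}$ with all these elements distinct, let $P^{(3)}$ hold exactly on the triples $(c_i,a_i,a_j)$ with $j<i$, and take the equations $P(x_1,x_2,a_i)$ with witnesses $(c_i,a_i)$. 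Condition~(\ref{cond_from_lemma}) holds, Lemma~\ref{lemma_tuple} changes nothing, and the second variable column equals the constant column while the first is disjoint from both. Gluing the two coinciding coordinates yields the predicate $P(u,v,v)$, which is identically false, so the glued structure has no perfectly non-Noetherian substructure; keeping them separate is also barred, since condition~(2) of Definition~\ref{def_cns} fails outright. The obstruction is the one you name: the satisfied instances $P(a^i_1,\ldots,a^i_p,b^j_1,\ldots,b^j_t)$ with $j<i$ carry \emph{different} elements in the two coinciding coordinates, so they are not instances of the glued predicate at all. A correct witness does exist here --- witnesses $(c_{2i},a_{2i})$ against the equations $P(x_1,x_2,a_{2i+1})$ give a perfectly non-Noetherian substructure for $P$ itself --- but producing it requires the desynchronizing re-indexing, and in general condition~(\ref{cond_from_lemma}) says nothing about the ``upper-triangle'' values $P(\ldots a^i\ldots b^j\ldots)$ for $j>i$, so one must first homogenize them by a Ramsey-type extraction and then split into the two branches you describe. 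You announce this dichotomy but do not prove it is exhaustive, so the ``only if'' direction is not established.

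For what it is worth, your diagnosis of where the difficulty sits is accurate: the published proof passes over this exact point in one sentence, asserting that once the coinciding columns are glued the elements of the extracted subsequence ``define a perfectly non-Noetherian substructure'' whenever the glued arity exceeds $1$, and reserving the clique case for $|I|=1$ only; the example above shows that assertion is not literally correct as written. So the step you call ``the main obstacle'' is the one step that genuinely needs an argument, and it is supplied neither in your write-up nor, in detail, in the paper. Until the lower-triangular-versus-clique dichotomy is actually carried out, your proposal proves the ``if'' direction and reduces the ``only if'' direction to this unresolved case, but does not close it.
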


        \begin{proof}
            Since $\A$ is not equationally Noetherian,  it follows from Lemma~\ref{theorem:basic} that there is a sequence of elements $\{(a_1^i, \ldots, a_p^i)\}_{i \in \N} \subseteq A^p$ and a sequence of equations $\{P(x_1, \ldots, x_p, b_1^i, \ldots, \allowbreak b_t^i)\}_{i \in \N}$ such that $\A \not\models P(a_1^i,\allowbreak \ldots, a_p^i, b_1^i, \ldots, b_t^i)$ for all $i$ and $\A \models P(a_1^i, \ldots, a_p^i, b_1^j, \ldots, b_t^j)$ for all $j < i$, where $p + t = n$.
    
            Consider the sequence $D = \{a_1^i, a_2^i, \ldots, a_p^i, b_1^i, b_2^i, \ldots, b_t^i\}_{i \in \N}$. It follows from Lemma~\ref{lemma_tuple} that there is a subsequence $\Tilde{D} = \{a_1^{i_1}, a_2^{i_1}, \ldots, a_p^{i_1},\allowbreak b_1^{i_1}, b_2^{i_1}, \ldots, b_t^{i_1}\}_{i \in \N}$ of $D$ such that every column $C_j(\Tilde{D})$, $j = 1, 2, \ldots, n,$ either contains only one element or contains infinitely many elements, and every pair of columns $C_{j_1}(\Tilde{D})$ and $C_{j_2}(\Tilde{D})$ either coincide or have no common elements.
    
            Denote by $J \subset \{1, \ldots, n\}$ the set of indexes of columns of $\Tilde{D}$ consisting of one element. Let  $d = |J|$ and $k = n - d$. Let $P_{\Tilde{D}}^{(k)}$ be the projection of the predicate $P^{(n)}$ onto $J$ by using the corresponding elements from the columns of the sequence $\Tilde{D}$ with indexes in $J$.
    
            Let $I$ be an exact partition of the set $\{1, \ldots, k\}$ such that for every $i,j \in N$ the columns $C_i(\Tilde{D})$ and $C_j(\Tilde{D})$ coincide if and only if the indexes $i,j$ belong to one element of the partition $I$.  Let $P_{\Tilde{D}, I} = \sfrac{P_{\Tilde{D}}^{(k)}}{I}$ be the gluing of $P_{\Tilde{D}}^{(k)}$ by the partition $I$.
    
            Let the arity of the predicate $P_{\Tilde{D}, I}$ be greater than $1$. Then, all the elements from $\Tilde{D}$ define a $P_{\Tilde{D}, I}$~-perfectly non-Noetherian substructure, and, therefore, by Definition~\ref{def_cns}, the structure $\A$ has a $P_{\Tilde{D}, I}$-perfectly non-Noetherian substructure.
    
            Let the arity of the predicate $P_{\Tilde{D}, I}$ be $1$, i.e. $|I| = 1$. Then, it is easy to see that the algebraic structure $\A' = \langle A, Q\rangle$, where $Q = \sfrac{P_{\Tilde{D}}^{(k)}}{\{\{1\}, \{2, \ldots, k\}\}}$, contains a non-Noetherian clique.
   
            Let us prove the opposite direction now. Let an algebraic structure $\A' = \langle A, \sfrac{P}{I} \rangle$ contain a $\sfrac{P}{I}$-perfectly non-Noetherian substructure, where $I = \bigsqcup_{j = 1}^m I_j$ is an exact partition of  $\{1, \ldots, n\}$. Then, by definition, there are sequences $\{(a^i_1, \ldots, a^i_p)\}_{i \in \N}$ and $\{(b^i_1, \ldots, b^i_t)\}_{i \in \N}$, where $p + t = m$, $p \ne 0$, $t \ne 0$, such that $\A \not\models \sfrac{P}{I}(a^i_1, \ldots, a^i_p, b^i_1, \ldots, b^i_t)$ for all $i$, and $\A \models \sfrac{P}{I}(a^i_1, \ldots, a^i_p, b^j_1, \ldots, b^j_t)$ for all $j < i$.  Restore the original predicate $P$ by $\sfrac{P}{I}$ and the exact partition $I$ in the following way. If $i$ and  $j$ are in one element of the partition, then we will identify elements in the $i$-th and $j$-th components. For the obtained sequences, the conditions of Lemma~\ref{lemma:kotov} hold.

            The case when an algebraic structure contains a non-Noetherian clique follows from Remark~\ref{comment:klika}.
        \end{proof}

        Above, we obtained the criterion for predicate structures with one predicate to be equationally Noetherian. The next lemma allows us to generalize the obtained result to languages with an arbitrary finite number of predicate symbols.
        
        \begin{lem}\label{theorem_nP}
            An algebraic structure $\A = \langle A, \La \rangle$ in a predicate language $\La$ with constants from $A$ and a finite number of predicate symbols is not equationally Noetherian if and only if there is a reduction $\La'$ of the language $\La$ such that $\La'$ has only one predicate symbol, and the algebraic structure $\A' = \langle A, \La' \rangle$ is not equationally Noetherian.
        \end{lem}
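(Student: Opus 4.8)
The plan is to reduce everything to Lemma~\ref{theorem:basic}, which already carries out the pigeonhole argument over the finitely many predicates; what remains is only bookkeeping about language reductions, so I expect no serious obstacle.

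For the direction ``$\Rightarrow$'' I would argue as follows. Assume $\A$ is not equationally Noetherian. Lemma~\ref{theorem:basic} then supplies infinite sequences of elements and of equations $S=\{s_i(\x)\}_{i\in\N}$ of the same configuration in a finite set of variables $\x$ satisfying~(\ref{cond_from_lemma}). Since equations of the same configuration share a single predicate symbol, say $P$, every $s_i$ lives in the reduction $\La'$ of $\La$ that keeps $P$, the equality symbol, and all the constants from $A$. This $\La'$ has exactly one predicate symbol, and $\A'=\langle A,\La'\rangle$ interprets $P$, equality, and the constants just as $\A$ does; hence $\A'\models s_j(\a_i)$ iff $\A\models s_j(\a_i)$ for all $i,j$, so the same two sequences witness~(\ref{cond_from_lemma}) over $\A'$. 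Lemma~\ref{lemma:kotov} then gives that $\A'$ is not equationally Noetherian. (I would also remark that $P$ cannot actually be the equality symbol, since a configuration of equality equations forcing $x_i=c$ is incompatible with~(\ref{cond_from_lemma}); but this observation is not needed for the statement.)

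For ``$\Leftarrow$'' I would take a reduction $\La'\subseteq\La$ with one predicate symbol for which $\A'=\langle A,\La'\rangle$ is not equationally Noetherian, fix a finite set of variables $\x$ and an $\La'$-system $S(\x)$ not equivalent over $\A'$ to any of its finite subsystems, and observe that $S(\x)$ is also an $\La$-system and that $V_{\A}(S_0)=V_{\A'}(S_0)$ for every subsystem $S_0\subseteq S$, because $S_0$ mentions only symbols of $\La'$, on which $\A$ and $\A'$ agree. Hence $S(\x)$ is not equivalent over $\A$ to any finite subsystem, and $\A$ is not equationally Noetherian.

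The only points requiring care are that ``same configuration'' genuinely forces the equations delivered by Lemma~\ref{theorem:basic} into a one-predicate reduct (so that $\La'$ can be taken with a single predicate symbol) and that one must keep the constants inside $\La'$, so that passing between $\A$ and $\A'$ changes neither the truth of the relevant atomic formulas nor the corresponding algebraic sets. Neither of these is a real obstruction; the substantive content of the statement sits entirely in Lemma~\ref{theorem:basic}.
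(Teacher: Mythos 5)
Your proof is correct and takes essentially the same route as the paper: the paper extracts the one-predicate infinite subsystem directly from Corollary~\ref{cor:kotov} and Proposition~\ref{state:onevariable} rather than citing Lemma~\ref{theorem:basic}, but this is the same pigeonhole argument, and the key point in both is that the reduction $\La'$ retains all constants so that truth of the relevant atomic formulas is unchanged. Your explicit write-up of the (trivial) converse direction, which the paper leaves unstated, and your parenthetical on why the surviving predicate cannot be equality are both sound.
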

        
        \begin{proof}
            Since $\A$ is not equationally Noetherian, it follows from Corollary~\ref{cor:kotov} that there are sequences of equations $S(\x)$ and elements $\{a_i\}_{i \in \N}$ satisfying Lemma~\ref{lemma:kotov}. From Proposition~\ref{state:onevariable}, there is an infinite subsystem of equations $S_{P} \subseteq S$ written with only one predicate symbol $P$ of the language $\La$ such  that it is not equivalent to any of its subsystems because of Corollary~\ref{cor:kotov}. Let $\La'$ consist of $P$ and all the constant symbols from $A$. Note that the system $S_{P}$ can be considered over the algebraic structure $\A' = \langle A, \La' \rangle$, where the interpretations of the symbols of the language $\La'$ coincide with the corresponding interpretations of these symbols in the original algebraic structure $\A$. Then $\A'$ is not equationally Noetherian.
        \end{proof}

        Lemmas~\ref{theorem_1P} and~\ref{theorem_nP},  in the form of the following theorem, are a criterion for arbitrary predicate structures to be equationally Noetherian in terms of forbidden substructures.

        \begin{thm}\label{th:main}
            An algebraic structure $\A = \langle A, \La \rangle$ in a predicate language $\La$ with constants from $A$ and a finite number of predicate symbols is not equationally Noetherian if and only if for some predicate symbol $P^{(n)}$ of the language $\La$ there is a projection $P'^{(k)}$ of the predicate $P^{(n)}$ and an exact partition $I$ of $\{1, \ldots, k\}$ such that at least one of the following conditions holds:
            \begin{itemize}
                \item 
                    $|I| > 1$ and the algebraic structure $\A' = \langle A, \sfrac{P'}{I} \rangle$ contains a perfectly non-Noetherian substructure;
                \item 
                    $|I| = 1$ and the algebraic structure $\A' = \langle A, Q \rangle$, where $Q = \sfrac{P'}{\{\{1\}, \{2, \ldots, k\}\}}$, contains a non-Noetherian clique.
            \end{itemize}
       \end{thm}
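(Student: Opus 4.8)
The plan is to obtain Theorem~\ref{th:main} by chaining together the two lemmas just proved: Lemma~\ref{theorem_nP}, which reduces the case of a language with finitely many predicates to the case of a language with a single predicate, and Lemma~\ref{theorem_1P}, which is exactly the criterion in the single-predicate case.

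For the forward direction, suppose $\A = \langle A, \La\rangle$ is not equationally Noetherian. By Lemma~\ref{theorem_nP} there is a reduction $\La'$ of $\La$ containing exactly one predicate symbol, say $P^{(n)}$, together with all the constants from $A$, such that $\A' = \langle A, \La'\rangle$ is not equationally Noetherian. Since $\A'$ is a structure in a language with a single predicate and constants from $A$, Lemma~\ref{theorem_1P} applies to it and yields a projection $P'^{(k)}$ of $P^{(n)}$ and an exact partition $I$ of $\{1,\ldots,k\}$ such that either $|I| > 1$ and $\langle A, \sfrac{P'}{I}\rangle$ contains a perfectly non-Noetherian substructure, or $|I| = 1$ and $\langle A, Q\rangle$ with $Q = \sfrac{P'}{\{\{1\},\{2,\ldots,k\}\}}$ contains a non-Noetherian clique. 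Because $P^{(n)}$ is a predicate symbol of $\La$ and the interpretations of the symbols of $\La'$ in $\A'$ agree with their interpretations in $\A$, the same projection, partition, and witnessing sequences serve for $\A$, which is precisely the stated conclusion.

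For the converse, suppose that for some predicate symbol $P^{(n)}$ of $\La$ there are a projection $P'^{(k)}$ and an exact partition $I$ of $\{1,\ldots,k\}$ satisfying one of the two listed conditions. Let $\La'$ be the reduction of $\La$ consisting of $P$ together with all the constants from $A$, and set $\A' = \langle A, \La'\rangle$. Since projections and gluings are defined purely in terms of $P$ and elements of $A$, the very same condition holds with $\A$ replaced by $\A'$; hence by the converse direction of Lemma~\ref{theorem_1P} the structure $\A'$ is not equationally Noetherian. Then by the converse direction of Lemma~\ref{theorem_nP} — a reduction failing to be equationally Noetherian forces the expansion to fail as well, since a witnessing system of $\La'$-equations is also a system of $\La$-equations and equivalence over $\A'$ coincides with equivalence over $\A$ — the structure $\A$ is not equationally Noetherian.

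There is no genuinely hard step here; the whole substance of the argument already resides in Lemmas~\ref{theorem_1P} and~\ref{theorem_nP}. The only point demanding care is the bookkeeping: verifying that the single predicate $P$ produced by Lemma~\ref{theorem_nP} is a genuine predicate symbol of the original language $\La$, so that it matches the quantification over predicate symbols $P^{(n)}$ of $\La$ in the statement, and that the notions of projection and gluing are insensitive to whether $P$ is viewed inside $\La$ or inside the reduction $\La'$.
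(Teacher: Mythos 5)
Your proposal is correct and matches the paper exactly: the paper gives no separate proof of Theorem~\ref{th:main}, stating only that it is the combination of Lemmas~\ref{theorem_1P} and~\ref{theorem_nP}, which is precisely the chaining you carry out. If anything, you are more careful than the paper, since you explicitly supply the (easy) converse direction of Lemma~\ref{theorem_nP}, which the paper's proof of that lemma omits.
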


\section{Equationally Noetherian graphs, hypergraphs, and partial orders}

    In this section, we give examples of the application of Theorem~\ref{th:main} to graphs, partial orders, and hypergraphs.

    Let $\L = \{E^{(2)}\}$ be a language of graph theory. Consider the language $\L_{\Gamma}$ containing constants from the graph $\Gamma$. The statement that nodes $u$ and  $v$ are adjacent can be written in the form of the equation $E(u, v)$ in the language $\L_{\Gamma}$. Note that the equation $E(x, x)$ is always false for simple graphs, and this equation is always true for graphs with loops. Previously, in \cite{BuchinskiyTreier2023}, all equationally Noetherian simple graphs and graphs with loops were described in terms of forbidden subgraphs. The next theorem was the main result of that paper.

    \begin{thm}\label{th:SEMR}
        The following statements are true:
        \begin{itemize}
            \item
                A simple graph is not equationally Noetherian if and only if it is either perfectly non-Noetherian or an overclique.
            \item
                A graph with loops is not equationally Noetherian if and only if it is perfectly non-Noetherian.
        \end{itemize}
    \end{thm}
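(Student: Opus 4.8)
The plan is to derive Theorem~\ref{th:SEMR} from Theorem~\ref{th:main} by specializing it to the language $\L_{\Gamma}$, whose only predicate symbol is the binary $E$. In Theorem~\ref{th:main} one then has $P = E$ and $n = 2$, so the only arities available for a projection $P'^{(k)}$ of $E$ are $k = 1$ (fixing one of the two arguments by a constant) and the trivial case $k = 2$, in which $P' = E$.

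First I would discard the case $k = 1$, where $P'$ is unary: the first bullet of Theorem~\ref{th:main} cannot hold because a partition of the one-element set $\{1\}$ has $|I| = 1$, and the second cannot hold because, by the Remark following Proposition~\ref{state:onevariable}, a structure with a single unary predicate and constants is equationally Noetherian, while a non-Noetherian clique would make it not equationally Noetherian (Remark~\ref{comment:klika}). So only $P' = E$, $k = 2$, is relevant, and for $k = 2$ there are exactly two exact partitions of $\{1,2\}$. The partition $I = \{\{1\},\{2\}\}$, with $|I| = 2$, gives $\sfrac{E}{I} = E$, so the first bullet reads ``$\Gamma$ contains a perfectly non-Noetherian substructure''. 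The partition $I = \{\{1,2\}\}$, with $|I| = 1$, gives $Q = \sfrac{E}{\{\{1\},\{2\}\}} = E$, so the second bullet reads ``$\Gamma = \langle A, E\rangle$ contains a non-Noetherian clique''.

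Next I would split into the two kinds of graphs. If $\Gamma$ is a simple graph then $E$ is irreflexive and symmetric, so the clause $\A \not\models E(a_i,a_i)$ of Definition~\ref{def:clique} is automatic and $\Gamma$ contains a non-Noetherian clique if and only if it has an infinite clique --- which is precisely what \cite{BuchinskiyTreier2023} calls being an overclique. Together with the fact, recorded in the Example following Definition~\ref{def_cns}, that for graphs ``$\A$ contains a perfectly non-Noetherian substructure'' coincides with $\Gamma$ being a perfectly non-Noetherian graph in the sense of \cite{BuchinskiyTreier2023}, Theorem~\ref{th:main} yields that a simple graph is not equationally Noetherian if and only if it is perfectly non-Noetherian or an overclique. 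If $\Gamma$ is a graph with loops then $E$ is reflexive, so $\A \not\models E(a_i,a_i)$ never holds and $\Gamma$ contains no non-Noetherian clique; the second bullet is vacuous, and Theorem~\ref{th:main} collapses to ``not equationally Noetherian if and only if perfectly non-Noetherian''. The converse implications require nothing extra, since Theorem~\ref{th:main} is an equivalence.

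The enumeration of projections and partitions, and the reflexivity bookkeeping that switches the overclique clause on and off, are routine. The step I expect to need the most care --- and hence the main obstacle --- is reconciling the vocabulary with \cite{BuchinskiyTreier2023}: one must verify against the definitions there that ``contains a non-Noetherian clique'' is the same as being an overclique, and that ``contains a perfectly non-Noetherian substructure'' is the same as being a perfectly non-Noetherian graph. The latter is asserted in the Example following Definition~\ref{def_cns}; the former should be read off Definition~\ref{def:clique}, where for a simple graph it simply says that $\Gamma$ contains an infinite complete subgraph. A minor loose end is that the expression $Q = \sfrac{P'}{\{\{1\},\{2,\dots,k\}\}}$ in the second bullet degenerates when $k = 1$ (the block $\{2,\dots,k\}$ is empty), but this is harmless because $k = 1$ has already been excluded.
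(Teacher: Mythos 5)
Your proposal is correct and follows essentially the same route as the paper, which presents Theorem~\ref{th:SEMR} as a specialization of Theorem~\ref{th:main} via exactly the identifications you make: perfectly non-Noetherian graph $\leftrightarrow$ contains a perfectly non-Noetherian substructure, overclique $\leftrightarrow$ contains a non-Noetherian clique (for simple graphs), and reflexivity of $E$ ruling out non-Noetherian cliques for graphs with loops. Your explicit enumeration of the possible projections and partitions of $\{1,2\}$, including the dismissal of the degenerate $k=1$ case, is more detailed than the paper's one-paragraph remark but adds nothing contrary to it.
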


    Note that the notion of a perfectly non-Noetherian graph coincides with the property of containing a perfectly non-Noetherian substructure, and the notion of being an overclique in the case of simple graphs is equivalent to the property of containing a non-Noetherian clique. Graphs with loops do not contain non-Noetherian cliques because the predicate $E$ is reflexive for such graphs. Therefore, Theorem~\ref{th:SEMR} is a specialization of Theorem~\ref{th:main} for simple graphs and graphs with loops.

    It is easy to see that every predicate structure with one predicate $P^{(n)}$, $n > 2$, can be considered as a hypergraph in which the set of edges is the predicate $P^{(n)}$. Therefore, Theorem~\ref{th:main} can be adapted for hypergraphs.

    Previously, in \cite{NikitinKudyk2018}, a criterion for non-strict partially ordered sets to be equationally Noetherian was proved. The key notions for this criterion are the notions of upper and lower cones. Let us recall them.

    Let $\P = \langle P, \preceq \rangle$ be a partially ordered set, and  $A$ be a subset of it. Let $A^{\uparrow} = \{x \in \P \; | \; \forall a \in A \; a \preceq x\}$ and $A^{\downarrow} = \{x \in \P \; | \; \forall a \in A \; x \preceq a\}$. The pair $(A, A^{\uparrow})$ is called the {\it upper base cone} of  $A$. An upper base cone of $A$ is called {\it finitely generated} if there is a finite subset $B \subseteq A$ such that $B^{\uparrow} = A^{\uparrow}$. Otherwise, if there is no such finite set, we say that it is {\it infinitely generated}. {\it The lower base cone} of $A$ can be defined similarly. 

    The main result of the paper~\cite{NikitinKudyk2018} is the following theorem:

    \begin{thm}\label{theorem_NikitinKudyk}
        A partially ordered set $\P$ is equationally Noetherian if and only if the upper and lower base cones of $A$ are finitely generated for every subset $A$ of $\P$.
    \end{thm}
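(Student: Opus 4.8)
The plan is to derive Theorem~\ref{theorem_NikitinKudyk} from Theorem~\ref{th:main}, applied to $\P = \langle P, \preceq \rangle$ regarded as a structure in the language with the single binary predicate $\preceq$ and the constants from $P$. Two features of a partial order make the general criterion collapse here, and I would establish them first. Since $\preceq$ is reflexive, $\P$ contains no non-Noetherian clique, and the same holds for every binary predicate obtainable from $\preceq$ by the projections and gluings of Theorem~\ref{th:main}; exactly as in the remark on graphs with loops after Theorem~\ref{th:SEMR}, the non-Noetherian clique alternative is therefore vacuous. Moreover, $\preceq$ being binary, its only non-unary projection is $\preceq$ itself, and a unary projection cannot satisfy $|I| > 1$; so the only surviving alternative of Theorem~\ref{th:main} is that $\langle P, \preceq \rangle$ contains a perfectly non-Noetherian substructure. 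Here one must keep track of which coordinate of $\preceq$ carries the free variable in the normal form provided by Lemma~\ref{theorem:basic}: the two possibilities amount to using $\preceq$ or its converse $\succeq$, and this is precisely the dichotomy that will distinguish lower cones from upper cones.

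Next I would dispatch the easy implication. Suppose $A^{\uparrow}$ is infinitely generated for some $A \subseteq \P$; then $A$ is infinite, and the one-variable system $S(x) = \{\, c_a \preceq x \mid a \in A \,\}$ has solution set $A^{\uparrow}$, while a finite subsystem has solution set $B^{\uparrow}$ for the corresponding finite $B \subseteq A$. By hypothesis $B^{\uparrow} \neq A^{\uparrow}$ for every finite $B \subseteq A$, so $S$ is not equivalent to any of its finite subsystems and $\P$ is not equationally Noetherian. If instead $A^{\downarrow}$ is infinitely generated, the system $\{\, x \preceq c_a \mid a \in A \,\}$ works the same way.

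For the converse, assuming $\P$ is not equationally Noetherian, the first paragraph together with Theorem~\ref{th:main} yields that $\langle P, \preceq \rangle$ (or $\langle P, \succeq \rangle$) contains a perfectly non-Noetherian substructure; as the predicate is binary, Definition~\ref{def_cns} forces $p = t = 1$, so I get pairwise distinct elements $a_1, b_1, a_2, b_2, \ldots$ of $P$ with, in the $\preceq$ case, $a_i \not\preceq b_i$ for all $i$ and $a_i \preceq b_j$ for all $j < i$. Setting $A = \{\, b_i \mid i \in \N \,\}$, the relations $a_i \preceq b_j$ ($j < i$) give $a_{m+1} \in \{ b_1, \ldots, b_m \}^{\downarrow}$ for every $m$, while $a_{m+1} \not\preceq b_{m+1}$ gives $a_{m+1} \notin A^{\downarrow}$. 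Since any finite $F \subseteq A$ lies in $\{ b_1, \ldots, b_m \}$ for some $m$, we get $F^{\downarrow} \supseteq \{ b_1, \ldots, b_m \}^{\downarrow} \ni a_{m+1}$, hence $F^{\downarrow} \neq A^{\downarrow}$, so $A^{\downarrow}$ is infinitely generated. In the $\succeq$ case the same computation with the order reversed makes $A^{\uparrow}$ infinitely generated. In either case $\P$ has a subset whose upper or lower base cone is infinitely generated, which is what the theorem asserts.

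The part I expect to be the main obstacle is the reduction carried out in the first paragraph: one has to verify rigorously that, for the reflexive binary predicate $\preceq$, Theorem~\ref{th:main} really does leave only the perfectly non-Noetherian substructure alternative, which entails excluding unary projections, excluding the non-Noetherian clique, and --- most delicately --- tracking the coordinate position of the free variable, since it is that position which decides whether the forbidden configuration encodes a bad lower cone or a bad upper cone. Once that is settled, the rest is the elementary index bookkeeping shown in the third paragraph.
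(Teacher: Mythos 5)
Your proof is correct, but note that the paper does not actually prove this statement: Theorem~\ref{theorem_NikitinKudyk} is imported verbatim from~\cite{NikitinKudyk2018} and then used as a black box, together with Theorem~\ref{th:main}, to deduce the equivalence $2 \iff 3$ in Proposition~\ref{lemma_eq_NikitinKudyk}. What you have written is therefore a genuinely different (and self-contained) route: you derive the Nikitin--Kudyk criterion \emph{from} Theorem~\ref{th:main}, which reverses the paper's logical dependency and, since Theorem~\ref{th:main} is proved independently, introduces no circularity. Your argument closely parallels the paper's own proof of the analogous proposition for \emph{strict} partial orders: your third paragraph is essentially the paper's $2 \Rightarrow 3$ cone bookkeeping there, and your second paragraph replaces the paper's more laborious $3 \Rightarrow 2$ construction of the sequences $\{a_i\},\{b_i\}$ by the shorter observation that an infinitely generated cone directly yields a system $\{c_a \preceq x\}$ (or $\{x \preceq c_a\}$) not equivalent to any finite subsystem, i.e.\ you prove $3 \Rightarrow 1$ outright. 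What your approach buys is that Proposition~\ref{lemma_eq_NikitinKudyk} ceases to depend on the external reference; what it costs is the careful case analysis in your first paragraph, which is indeed the delicate point: Theorem~\ref{th:main} speaks only of projections and gluings, so passing to the converse predicate $\succeq$ (the coordinate permutation needed when the free variable sits in the second slot of $\preceq$) is an implicit ``without loss of generality'' inherited from the normal form after Lemma~\ref{theorem:basic}; you flag and track this correctly, and your elimination of the unary projections and of the non-Noetherian clique (by reflexivity of $\preceq$) is sound. No gap.
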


    Let us show how this theorem is connected to Theorem~\ref{th:main}. Since the predicate $\preceq$ is not symmetric, for partially ordered sets, there exist two perfectly non-Noetherian substructures: for infinite systems of equations of the form $x \preceq b_j$ and infinite systems of equations of the form $b_j \preceq x$. For example, for equations of the form $x \preceq b_j$, a perfect non-Noetherian partially ordered set can be depicted in the following way:

    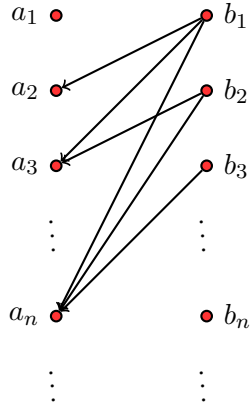
\begin{figure}
        \centering
        \begin{tikzpicture}[thick,scale=1, every node/.style={scale=1}]
            \path (-1,0) node[red node][label=180:$a_1$] (a1) {};
            \path (1,0) node[red node][label=0:$b_1$] (b1) {};
            \path (-1,-1) node[red node][label=180:$a_2$] (a2) {};
            \path (1,-1) node[red node][label=0:$b_2$] (b2) {};
            \path (-1,-2) node[red node][label=180:$a_3$] (a3) {};
            \path (1,-2) node[red node][label=0:$b_3$] (b3) {};
            \path (-0.5,-3) node[][label=180:\rotatebox{130}{$\ddots$}] (ldots1) {};
            \path (1.5,-3) node[][label=180:\rotatebox{130}{$\ddots$}] (rdots1) {};
            \path (-1,-4) node[red node][label=180:$a_n$] (an) {};
            \path (1,-4) node[red node][label=0:$b_n$] (bn) {};
            \path (-0.5,-5) node[][label=180:\rotatebox{130}{$\ddots$}] (ldots2) {};
            \path (1.5,-5) node[][label=180:\rotatebox{130}{$\ddots$}] (rdots2) {};
            \draw [<-](a2) -- (b1);
            \draw [<-](a3) -- (b1);
            \draw [<-](a3) -- (b2);
            \draw [<-](an) -- (b1);
            \draw [<-](an) -- (b2);
            \draw [<-](an) -- (b3);
        \end{tikzpicture}
        \caption{\it An example of a $\prec$-perfectly non-Noetherian substructure; an arc from $b_i$ to $a_j$ means that $a_j \prec b_i$}\label{fig:nonoether}
    \end{figure}

    \begin{rem}
        For all partially ordered sets $\P = \langle P, \preceq \rangle$, there is no non-Noetherian clique because the predicate $\preceq$ is reflexive. Later we will show that for strict partial orders, there is a non-Noetherian clique.
    \end{rem}

    Therefore, the following proposition connects Theorem~\ref{theorem_NikitinKudyk} and the specialization of Theorem~\ref{th:main} for non-strict partially ordered sets:

    \begin{prop}\label{lemma_eq_NikitinKudyk}
        Let $\P = \langle P, \preceq \rangle$ be a partially ordered set. Then the following statements are equivalent:
        \begin{enumerate}
            \item 
                the partial order $P$ is not equationally Noetherian;
            \item
                $\P$ contains a $\preceq$-perfectly non-Noetherian substructure;
            \item
                there is a subset $B$ of $\P$ such that the upper or the lower base cone of $B$ is infinitely generated.
        \end{enumerate}
    \end{prop}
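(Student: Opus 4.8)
The plan is to prove the cycle $(3)\Rightarrow(1)\Rightarrow(2)\Rightarrow(3)$, which keeps every step short. For $(3)\Rightarrow(1)$ I would argue directly, without the general machinery. If $B\subseteq P$ has an infinitely generated upper base cone, consider the system $S(x)=\{\,c_b\preceq x\mid b\in B\,\}$ in the single variable $x$, where $c_b$ is the constant symbol naming $b$. Its solution set over $\P$ is exactly $B^{\uparrow}$, and the solution set of the finite subsystem indexed by a finite $B_0\subseteq B$ is $B_0^{\uparrow}$. Since $B_0^{\uparrow}\ne B^{\uparrow}$ for every finite $B_0\subseteq B$, the system $S$ is equivalent over $\P$ to none of its finite subsystems, so $\P$ is not equationally Noetherian. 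If instead the lower base cone of some $B$ is infinitely generated, the system $\{\,x\preceq c_b\mid b\in B\,\}$ does the same job.

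For $(1)\Rightarrow(2)$ I would specialize Lemma~\ref{theorem_1P} (equivalently Theorem~\ref{th:main}) to $\langle P,\preceq\rangle$, whose only predicate symbol $\preceq$ is binary, and enumerate the cases of the criterion. A projection of $\preceq$ has arity $1$ or $2$. If it has arity $1$, it admits only the one-block partition of $\{1\}$, so the clause with $|I|>1$ does not apply, and the clause with $|I|=1$ presupposes arity at least two, so it does not apply either; hence the projection must be $\preceq$ itself. The exact partitions of $\{1,2\}$ are $\{\{1\},\{2\}\}$ and $\{\{1,2\}\}$. For the first, $|I|=2$ and $\sfrac{\preceq}{I}=\preceq$, so the criterion asks exactly whether $\langle P,\preceq\rangle$ contains a $\preceq$-perfectly non-Noetherian substructure. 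For the second, $|I|=1$ and the associated binary predicate $Q$ is again $\preceq$, so the criterion asks whether $\langle P,\preceq\rangle$ contains a non-Noetherian clique; but $\preceq$ is reflexive, so by Definition~\ref{def:clique} this is impossible. Therefore $\langle P,\preceq\rangle$ must contain a $\preceq$-perfectly non-Noetherian substructure, possibly after swapping the two arguments of $\preceq$; that swap is exactly what yields the orientation $b_i\preceq x$ of the defining equations, the second of the two perfectly non-Noetherian substructures noted before the proposition, so clause (2) holds.

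For $(2)\Rightarrow(3)$ I would unwind Definition~\ref{def_cns} for $P=\preceq$. Discarding the degenerate cases with no variables or no constants, a $\preceq$-perfectly non-Noetherian substructure is a family of pairwise distinct elements $a_1,b_1,a_2,b_2,\dots$ with $\P\not\models a_i\preceq b_i$ for all $i$ and $\P\models a_i\preceq b_j$ for all $j<i$ (the mirror orientation, with the variable on the right, being dual). Put $B=\{b_i\mid i\in\N\}$, which is infinite since the $b_i$ are distinct. Given a finite $B_0\subseteq B$, pick $m$ with $B_0\subseteq\{b_1,\dots,b_m\}$; then $a_{m+1}\preceq b_j$ for all $j\le m$ shows $a_{m+1}\in B_0^{\downarrow}$, while $a_{m+1}\not\preceq b_{m+1}$ with $b_{m+1}\in B$ shows $a_{m+1}\notin B^{\downarrow}$. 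Hence $B_0^{\downarrow}\ne B^{\downarrow}$ for every finite $B_0\subseteq B$, so the lower base cone of $B$ is infinitely generated and (3) holds; in the mirror orientation one obtains, symmetrically, an infinitely generated upper base cone.

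The step I expect to carry the weight is $(1)\Rightarrow(2)$: one must be thorough in eliminating every irrelevant branch of the general criterion (non-Noetherian cliques, the trivial gluing, and arity-reducing projections) on the sole ground that $\preceq$ is reflexive, and one must keep in mind that the criterion identifies the obstruction only up to a permutation of the arguments of $\preceq$, which is exactly what produces the two orientations $x\preceq b_i$ and $b_i\preceq x$ and, correspondingly, the choice between the upper and the lower base cone in clause (3). The remaining two implications are short and essentially mechanical.
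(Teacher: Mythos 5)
Your proposal is correct, but it takes a genuinely different route from the paper. The paper's proof of this proposition is a two-line citation: the equivalence $1 \iff 2$ is read off from Theorem~\ref{th:main} (exactly your middle step, including the elimination of the non-Noetherian clique branch via reflexivity of $\preceq$, which the paper relegates to the remark preceding the proposition), and the equivalence $1 \iff 3$ is imported wholesale from the Nikitin--Kudyk result, Theorem~\ref{theorem_NikitinKudyk}; the equivalence $2 \iff 3$ is then obtained only by transitivity. You instead close the cycle $(3)\Rightarrow(1)\Rightarrow(2)\Rightarrow(3)$ with direct arguments: the system $\{c_b\preceq x \mid b\in B\}$ whose solution set is $B^{\uparrow}$ for $(3)\Rightarrow(1)$, and the witness $a_{m+1}\in B_0^{\downarrow}\setminus B^{\downarrow}$ for $(2)\Rightarrow(3)$. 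Both arguments are sound, and they are essentially the ones the paper does write out -- but only in the \emph{next} proposition, for strict partial orders. What your version buys is self-containedness: it does not assume Theorem~\ref{theorem_NikitinKudyk} but rather re-derives it (for the relevant directions) as a consequence of Theorem~\ref{th:main}, whereas the paper uses the external theorem as a black box. The only mild caveat is your reliance, in $(1)\Rightarrow(2)$, on the convention that the identity map counts as a projection and that a $\preceq$-perfectly non-Noetherian substructure covers both argument orders $x\preceq b_i$ and $b_i\preceq x$; both conventions are the ones the paper itself uses implicitly, and you flag the second one explicitly, which is if anything more careful than the original.
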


    \begin{proof}
        The equivalence $1 \iff 2$ follows from Theorem~\ref{th:main}. The equivalence $1 \iff 3$ follows from Theorem~\ref{theorem_NikitinKudyk}. Therefore, $2 \iff 3$.
    \end{proof}

    Now, consider strict linear orders. Upper and lower cones can be defined for them similarly. The following proposition is a specialization of Theorem~\ref{th:main} for strict partial orders:

    \begin{prop}    
        Let $\P = \langle P, \prec \rangle$ be a strict partially ordered set. Then the following statements are equivalent:
        \begin{enumerate}
            \item 
                the partial order $P$ is not equationally Noetherian;
            \item
                $\P$ contains a $\prec$-perfectly non-Noetherian substructure or a non-Noetherian clique;
            \item
                there is a subset $B$ of $\P$ such that the upper or lower base cone of $B$ is infinitely generated.
        \end{enumerate}
    \end{prop}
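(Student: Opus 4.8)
The proposition claims the equivalence of three conditions for a strict poset $\P=\langle P,\prec\rangle$. Mirroring the proof of Proposition~\ref{lemma_eq_NikitinKudyk}, the plan is to establish $1\iff 2$ by specializing Theorem~\ref{th:main}, and $1\iff 3$ directly; the two equivalences then give $2\iff 3$. Note that we cannot here appeal to an analogue of Theorem~\ref{theorem_NikitinKudyk}, which was stated only for non-strict orders, so the ``cone'' equivalence must be argued by hand.

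For $1\iff 2$ I would apply Theorem~\ref{th:main} to the standing language whose only relation symbols are $\prec$ and the equality $=$, and check that its projection--gluing--partition apparatus collapses, in this case, to exactly the two phenomena appearing in~(2). A proper projection of a binary predicate is unary, a gluing of a unary predicate is again unary, and for a unary predicate the only exact partition is $\{\{1\}\}$, with $|I|=1$; hence the ``$|I|>1$'' clause of Theorem~\ref{th:main} is vacuous and the ``$|I|=1$'' clique clause is degenerate, so unary projections furnish no witness (in accordance with the Remark after Proposition~\ref{state:onevariable}). Taking instead $P'=\prec$ with no coordinate fixed, the only gluing of $\prec$ of arity $\ge 2$ is $\prec$ itself (partition $\{\{1\},\{2\}\}$, $|I|=2$), giving ``$\langle P,\prec\rangle$ contains a $\prec$-perfectly non-Noetherian substructure'', while the clique clause produces $Q=\sfrac{\prec}{\{\{1\},\{2\}\}}=\prec$, giving ``$\langle P,\prec\rangle$ contains a non-Noetherian clique''; and $=$, being reflexive and separating distinct elements, admits neither kind of witness. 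Since $\prec$ is irreflexive --- unlike the reflexive $\preceq$ of Proposition~\ref{lemma_eq_NikitinKudyk} --- the clique disjunct is genuinely present, as anticipated in the remark preceding this proposition.

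For $3\Rightarrow 1$: if $B\subseteq P$ has infinitely generated upper base cone, then $B$ is infinite, the system $\{\,b\prec x : b\in B\,\}$ has solution set $B^{\uparrow}$, and every finite subsystem $\{\,b\prec x : b\in B_0\,\}$ with $B_0\subseteq B$ finite has solution set $B_0^{\uparrow}\supsetneq B^{\uparrow}$, so $\P$ is not equationally Noetherian; the lower-cone case uses the equations $x\prec b$. For $1\Rightarrow 3$: if $\P$ is not equationally Noetherian, Lemma~\ref{theorem:basic} supplies an infinite set of equations of one common configuration together with matching points satisfying~(\ref{cond_from_lemma}). I would then rule out every configuration except $x\prec b_i$ and $b_i\prec x$: a configuration with no variable is excluded by hypothesis; a configuration $x_i\prec x_j$ (and, similarly, $x_i=x_j$, $x_i=x_i$, $x_i\prec x_i$) is a single equation, which cannot support an infinite system obeying~(\ref{cond_from_lemma}); and a configuration $x_i=c_k$ or $c_k=x_j$ would force, via~(\ref{cond_from_lemma}), that $a_k=c_1=\ldots=c_{k-1}$ for every $k$ while $a_k\ne c_k$, which is impossible. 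What survives, after renaming the unique variable as $x$ and passing to the subsystem of pairwise-distinct equations (still a witness, by Corollary~\ref{cor:kotov}), is a system $\{\,x\prec b_i\,\}_{i\in\N}$ or $\{\,b_i\prec x\,\}_{i\in\N}$ with pairwise distinct $b_i$, whose solution set is $\{b_i:i\in\N\}^{\downarrow}$ (resp.\ $\{b_i:i\in\N\}^{\uparrow}$); since it is equivalent to no finite subsystem and finite subsystems correspond to finite subsets of $\{b_i:i\in\N\}$, the lower (resp.\ upper) base cone of $B=\{b_i:i\in\N\}$ is infinitely generated.

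I do not expect a deep obstacle. The two spots needing care are the bookkeeping in $1\iff 2$ --- in particular, making sure that both orientations ``$x\prec b_i$'' and ``$b_i\prec x$'' of a $\prec$-perfectly non-Noetherian substructure are regarded as covered by condition~(2), as the discussion around Figure~\ref{fig:nonoether} suggests --- and the short computation eliminating the equality configurations in $1\Rightarrow 3$. Everything else follows the template of Proposition~\ref{lemma_eq_NikitinKudyk} and the elementary correspondence between single-variable $\prec$-systems and base cones.
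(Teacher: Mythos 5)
Your proposal is correct, but it decomposes the proof differently from the paper. Both you and the paper obtain $1 \iff 2$ as a specialization of Theorem~\ref{th:main} (your bookkeeping showing that projections and gluings of a binary irreflexive predicate yield exactly the two disjuncts of condition~2, and that the equality predicate contributes nothing, is sound and in fact more explicit than what the paper writes). The difference is in the remaining leg: the paper proves $2 \iff 3$ directly, translating a perfectly non-Noetherian substructure or a clique into an infinitely generated cone and, conversely, building the sequences $\{a_i\}$, $\{b_i\}$ from an infinitely generated cone and then invoking Lemma~\ref{lemma_tuple} to decide between the substructure and the clique case; you instead prove $1 \iff 3$ directly, exploiting the fact that finite subsystems of $\{\,b \prec x : b \in B\,\}$ correspond exactly to finite subsets of $B$, so that ``not equivalent to a finite subsystem'' is literally ``$B^{\uparrow}$ (or $B^{\downarrow}$) infinitely generated''. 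Your $3 \Rightarrow 1$ is essentially a one-liner, which is cheaper than the paper's $3 \Rightarrow 2$; the price is that your $1 \Rightarrow 3$ re-runs the configuration analysis behind Lemma~\ref{theorem:basic} (ruling out the variable-only and equality configurations), whereas the paper reuses that analysis only once, inside Theorem~\ref{th:main}. Your elimination of the configurations $x_i = c_k$ and the observation that a constant-free configuration supports only finitely many distinct equations are both correct, as is your caveat that the two orientations $x \prec b$ and $b \prec x$ must both be read into ``$\prec$-perfectly non-Noetherian substructure'' --- a point the paper itself only addresses in the discussion preceding the proposition. Either route is a complete proof.
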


    \begin{proof}
        The equivalence $1 \iff 2$ follows from Theorem~\ref{th:main}. Let us prove that $2 \iff 3$.

        $2 \Rightarrow 3$. Let $\P$ contain a $\prec$-perfectly non-Noetherian substructure, and let $\{a_i\}_{i \in \N}$ and $\{b_i\}_{i \in \N}$ be the sequences defining the perfectly non-Noetherian substructure (see Picture~\ref{fig:nonoether}).  Without loss of generality, we will consider only equations of the form $x \prec b_i$ (otherwise, we would be dealing with equations of the form $b_i \prec x$). Let us show that the lower base cone of the set $B = \{b_i\}_{i \in \N}$ is infinitely generated. Let us assume the converse, i.e., there is a finite subset $C \subsetneq B$ such that $B^{\downarrow} = C^{\downarrow} = \{x \in \P \; | \; \forall c \in C \; x \prec c\}$. Let $C = \{b_{i_1}, \ldots, b_{i_q}\}$ and $m = \max\{i_1, \ldots, i_q\}$. Note that $C^{\downarrow}$ has elements $a_r$ for all $r > m$. They cannot belong to $B^{\downarrow}$ because, for all $a_r, \ r > m$, we have $a_r \not\prec b_r$.  We have a contradiction. For equations of the form $b_i \prec x$, the reasoning is similar with the difference that it is necessary to consider the upper base cone of the set $B$. 

        Let $\P$ contain a non-Noetherian clique, i.e., there is a sequence $\{a_i\}_{i \in \N} \subseteq P$ such that $\P \not\models a_i \prec a_i$ for all $i$ (it is always true because the predicate $\prec$ is reflexive) and $\P \models a_i \prec a_j$ for all $j < i$. As in the case of a $\prec$-perfectly non-Noetherian substructure, show that the lower base cone of $B = \{a_i\}_{i \in \N}$ is infinitely generated. Assume the converse, i. e. there is a finite subset $C \subsetneq B$ such that $B^{\downarrow} = C^{\downarrow} = \{x \in \P \; | \; \forall c \in C \; x \prec c\}$. Let $C = \{a_{i_1}, \ldots, a_{i_q}\}$ and $m = \max\{i_1, \ldots, i_q\}$. Note that $C^{\downarrow}$ has elements $a_r$ for all $r > m$, which do not belong to $B^{\downarrow}$. Contradiction. For equations of the form $a_i \prec x$, the reasoning is similar with the difference that it is necessary to consider the upper base cone of $B$.

        $3 \Rightarrow 2$. Let $B$ be a subset of $\P$ such that the lower base cone of $B$ is infinitely generated. Let $b_0$ be an element of $B$. Since the lower base cone of $B$ is infinitely generated, there is an element $b_1 \in B$ such that $\{b_0, b_1\}^{\downarrow} \subsetneq \{b_0\}^{\downarrow}$. Fix $a_1 \in \{b_0\}^{\downarrow} \setminus \{b_0, b_1\}^{\downarrow}$. Because the lower base cone of $B$ is infinitely generated, there is an element $b_2 \in B$ such that $\{b_0, b_1, b_2\}^{\downarrow} \subsetneq \{b_0, b_1\}^{\downarrow}$. Fix $a_2 \in \{b_0, b_1\}^{\downarrow} \setminus \{b_0, b_1, b_2\}^{\downarrow}$. Continuing this procedure, we obtain sequences of elements $\{a_i\}_{i \in \N}$ and $\{b_i\}_{i \in \N}$ such that $\P \not\models a_i \prec b_i$ for all $i$, and $\P \models a_i \prec b_j$ for $j < i$.
    
        Note that for all different $i$ and $j$, $a_i \ne a_j$ and $b_i \ne b_j$. Then, by Lemma~\ref{lemma_tuple}, for the sequence $\{(a_i, b_i)\}_{i \in \N}$, there is a subsequence $I \subseteq \N$ such that the sequences $\{a_i\}_{i \in I}$ and $\{b_i\}_{i \in I}$ either contain pairwise different elements or contain elements such that $a_i = b_i$ for all $i \in I$. In the first case, $\P$ contains a perfectly non-Noetherian substructure. In the second case, the sequences $\{a_i\}_{i \in I}$ and $\{b_i\}_{i \in I}$ form a non-Noetherian clique.

        If $B$ is a subset such that its upper base cone is infinitely generated, the reasoning is similar with the difference that it is necessary to use the predicate~$\succ$.
    \end{proof}

    \begin{exa}\label{last_example}
        Consider the natural order of integers $\mathcal{Z} = \langle \mathbb{Z}, \prec \rangle$. Note that the conditions from Definition~\ref{def:clique} hold for the sequence $\{i\}_{i \in \N}$, i.e., the order $\mathcal{Z}$ contains a non-Noetherian clique. The sequences of elements $\{-2i\}_{i \in \N}$ and equations $\{x \prec -2i - 1\}_{i \in \N}$, by Definition~\ref{def_cns}, define a perfectly non-Noetherian substructure of $\mathcal{Z}$. 
    
        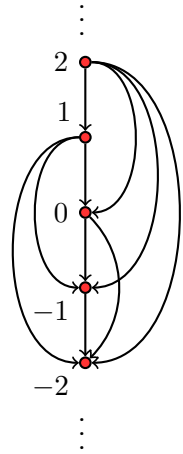
\begin{figure}
            \centering
            \begin{tikzpicture}[thick,scale=1, every node/.style={scale=1}]
                \path (-0.5,-3.5) node[][label=180:\rotatebox{125}{$\ddots$}] (ldots1) {};
                \path (-1,-6) node[red node][label=180:$0$] (a) {};
                \path (-1,-5) node[red node][label=120:$1$] (au) {};
                \path (-1,-4) node[red node][label=180:$2$] (auu) {};
                \path (-1,-7) node[red node][label=210:$-1$] (al) {};
                \path (-1,-8) node[red node][label=210:$-2$] (all) {};
                \path (-0.5,-9) node[][label=180:\rotatebox{125}{$\ddots$}] (ldots2) {};
                \draw [->](auu) -- (au);
                \draw [->](auu) edge[bend left=90] node {} (a);
                \draw [->](auu) edge[bend left=90] node {} (al);
                \draw [->](auu) edge[bend left=90] node {} (all);
                \draw [->](au) -- (a);
                \draw [->](au) edge[bend right=90] node {} (al);
                \draw [->](au) edge[bend right=90] node {} (all);
                \draw [->](a) -- (al);
                \draw [->](a) edge[bend left=45] node {} (all);
                \draw [->](al) -- (all);
            \end{tikzpicture}
            \caption{The partial order $\mathcal{Z} = \langle \mathbb{Z}, \prec \rangle$ from Example~\ref{last_example}}
        \end{figure}
    \end{exa}

\section*{Acknowledgments}
This research was supported in accordance with the state task of the IM SB RAS, project FWNF-2022-0003.

\bibliographystyle{plain}
\bibliography{main}

\end{document}